\newcommand{\tend}{\mathsf{end}}
\newcommand{\readb}{\mathsf{r}}
\newcommand{\writez}{\mathsf{w\!0}}
\newcommand{\writeo}{\mathsf{w\!1}}
\newcommand{\zos}{\{0,1\}^*}
\newcommand{\kkproc}{\mathsf{\kklam\!\star\!\kkpi}}
\newcommand{\kkproce}{\mathsf{P}}
\newcommand{\kkpole}{\kkbot}
\newcommand{\kklame}{\kklam_e}
\newcommand{\kkpie}{\kkpi_e}
\newcommand{\kklamp}{\kklam_p}
\newcommand{\kkpiz}{\kkpi_0}
\newcommand{\bequiv}{\simeq_\beta}
\newcommand{\gequiv}{\simeq_\gamma}
\newcommand{\tequiv}{\sim_\top}
\newcommand{\bin}{\mathrm{bin}}
\newcommand{\lwz}{\mathsf{w0}}
\newcommand{\lwo}{\mathsf{w1}}
\newcommand{\lrz}{\mathsf{r0}}
\newcommand{\lro}{\mathsf{r1}}
\newcommand{\lre}{\mathsf{r\ve}}
\newcommand{\lend}{\mathsf{e}}
\newcommand{\actions}{\mathit{Act}}
\newcommand{\labels}{\mathcal{L}}
\newcommand{\cn}[1]{\overline{#1}}
\newcommand{\kred}{\succ}
\newcommand{\kexec}{\leadsto}
\newcommand{\enti}{\vdash_I}
\newcommand{\entj}{\vdash_J}
\newcommand{\wiggles}{\leadsto^*}
\newcommand{\chole}{[\cdot]}
\newcommand{\porth}{^\Bot}
\newcommand{\toppi}{\to P(\kkpi)}
\newcommand{\ttrans}{\xrightarrow{\tau}}
\newcommand{\ttranss}{\xRightarrow{\tau}}
\newcommand{\atrans}{\xrightarrow{\alpha}}
\newcommand{\atranss}{\xRightarrow{\alpha}}
\newcommand{\R}{\mathbb{R}}
\newcommand{\omicron}{o}
\newcommand{\vertor}{\mathrel|}
\newcommand{\comccc}{\mathsf{c\!c}}
\newcommand{\qdot}{\,.\,}
\newcommand{\comk}{\mathsf{k}}				
\newcommand{\kkpi}{\mathsf{\Pi}}
\newcommand{\ap}{\mathclose\cdot}			
\newcommand{\kklam}{\mathsf{\Lambda}}
\newcommand{\ve}{\varepsilon}
\newcommand{\N}{\mathbb{N}}
\newcommand{\pto}{\rightharpoonup}
\newcommand{\dom}{\mathrm{dom}}
\newcommand{\subs}{\subseteq}
\newcommand{\qimp}{\quad\imp\quad}
\newcommand{\imp}{\Rightarrow}
\newcommand{\qtext}[1]{\quad\text{#1}\quad}
\newcommand{\wiggle}{\rightsquigarrow}
\newcommand{\setof}[2]{\{#1\msep #2\}}
\newcommand{\real}{\Vdash}
\newcommand{\msep}{\mathrel|\,}		
\newcommand{\vtp}{\,\mathclose:\,}				
\newcommand{\kkpl}{\mathsf{PL}}
\newcommand{\ent}{\vdash}
\newcommand{\fa}{\;\forall}
\newcommand{\csep}{\mathrel|\,}		
\newcommand{\defcon}{\quad\equiv\quad}
\newcommand{\trip}{{\EuScript{P}}}
\newcommand{\setord}{\catset\op\to\catord}
\newcommand{\catset}{{\mathbf{Set}}}
\newcommand{\catord}{\mathbf{Ord}}
\newcommand{\op}{^\mathsf{op}}				
\newcommand{\adj}{\dashv}
\newcommand{\prop}{{\mathsf{Prop}}}		
\newcommand{\triptr}{{\mathsf{tr}}}		
\newcommand{\id}{\mathrm{id}}				
\newcommand{\ex}{\;\exists}
\newcommand{\push}{\ap}
\newcommand{\kkbot}{\Bot}
\title{Realizability Toposes from Specifications
\footnote{
This work is supported by the Danish Council for Independent Research Sapere Aude grant ``Complexity
through Logic and Algebra'' (COLA).
}
}
\author[1]{Jonas Frey}
\affil[1]{Department of Computer Science\\
University of Copenhagen, Denmark\\
  \texttt{jofr@di.ku.dk}}
\authorrunning{J. Frey} 
\begin{document}

\maketitle

\begin{abstract}
We investigate a framework of Krivine realizability with
I/O effects, and present a method of associating 
realizability models to \emph{specifications} on the I/O 
behavior of processes, by using adequate interpretations of 
the central concepts of \emph{pole} and \emph{proof-like 
term}. This method does in particular allow to associate 
realizability models to computable functions. 

Following 
recent work of Streicher and others we show how these models 
give rise to \emph{triposes} and \emph{toposes}.
\end{abstract}

\section{Introduction}

Krivine realizability with side effects has been introduced by Miquel 
in~\cite{miquel2009modaleffects}.
In this article we demonstrate how an instance of Miquel's framework including
I/O instructions  allows to associate \emph{realizability toposes} to
\emph{specifications}, i.e.\ sets of requirements imposed on the I/O behavior of
programs. Since the requirement to compute a specific function $f$ can be viewed
as a specification, we do in particular obtain a way to \emph{associate toposes
to computable functions}. 

These toposes are different from traditional 
`Kleene' realizability toposes such as the \emph{effective 
topos}~\cite{hyland82} in that we 
associate toposes to \emph{individual} computable functions, whereas the 
effective topos incorporates \emph{all} recursive functions on equal footing. 
Another difference 
to the toposes based on Kleene realizability is that the internal logic of the
latter is \emph{constructive}, whereas the present approach is based on
Krivine's realizability interpretation~\cite{krivine2009realizabilitypanorama},
which validates classical logic.

To represent specifications we make use of the fact that Krivine's 
realizability interpretation is parametric over a set of processes called the 
\emph{pole}. The central observation (Lemma~\ref{lem:spec-pole} and
Theorem~\ref{theo:f-com-con}) is that non-trivial specifications on program 
behavior give rise to poles leading to consistent (i.e.\ non-degenerate)
interpretations.

To give a categorical account of Krivine realizability we follow recent work of 
Streicher~\cite{streicher2013krivine} and 
others~\cite{stekelenburg2013realizability,vanoosten2012classical,ferrer2014ordered}, 
which demonstrates how Krivine realizability models give rise to \emph{triposes}.
Toposes are then obtained via the tripos-to-topos construction~\cite{hjp80}.

Our basic formalism is an extension of the Krivine 
machine~\eqref{eq:red-rel} that gives an operational semantics to I/O
instructions for single bits. We give two formulations of the operational
semantics -- one \eqref{eq:ex-rel} in terms of a transition relation on 
processes including a \emph{state} (which is adequate for reasoning about 
function computation), and one~\eqref{eq:transsys} in terms of a labeled 
transition system admitting to reason about program equivalence in terms
of bisimulation. The two operational semantics are related by
Corollary~\ref{cor:tequiv}, which we use to prove a Turing completeness
result in Theorem~\ref{theo:turing}.

\subsection{Related work}

The idea of adding instructions with new evaluation rules to the machine
plays a central role in Krivine's writings, as a means to realize non-logical
axioms. Citing from~\cite{krivine2010ralgs}:
\begin{quotation}
\emph{``Indeed, when we realize usual axioms of mathematics, we need to 
introduce, one after the other, the very standard tools in system programming: 
for the law of Peirce, these are continuations (particularly useful for
exceptions); for the axiom of dependent choice, these are the clock and the
process numbering; for the ultrafilter axiom and the well ordering of $\R$, 
these are no less than I/O instructions on a global memory, in other
words assignment.''}
\end{quotation}
Although features like exceptions and memory are often called~\emph{effects},
it is arguable whether they should be called~\emph{side} effects, since they
do not interact with the outside world.

The idea to add instructions for \emph{side} effects which are influenced by -- 
and influence -- the outside world,p has already been investigated by 
Miquel~\cite[Section~2.2]{miquel2009modaleffects}, and our execution
relation~\eqref{eq:ex-rel} can be viewed as an instance of his framework.

What sets the present approach apart is that Miquel views the state of the world 
(represented by a forcing condition) as being \emph{part} of a process and 
requires poles to be saturated w.r.t.\ all (including effectful) reductions, 
whereas for us poles are sets of `bare' processes without state, which are 
saturated only w.r.t.\ reduction free of side-effects.

This difference is crucial in that it enables the construction of poles from
specifications.

\section{Syntax and machine}\label{sec:synmach}

In this section we recall Krivine's abstract machine with continuations 
as described in~\cite{krivine2009realizabilitypanorama}. We then go on to 
describe an extension of the syntax by I/O instructions, and
describe an operational semantics as a transition relation on triples
$(p,\iota,\omicron)$ of process, input, and output.

\subsection{Krivine's machine}\label{sec:kam}

We recall the underlying syntax and machine of Krivine's classical realizability
from~\cite{krivine2009realizabilitypanorama}. The syntax consists of three 
syntactic classes called \emph{terms, stacks}, and \emph{processes}.
\begin{equation}\label{eq:grammar-basic}
\begin{tabular}{l>{$}l<{$}@{$\;::=\;$}>{$}l<{$}l}
Terms: & t &  x\vertor \lambda x\!\qdot\! t\vertor t{}\,t\vertor\comccc \vertor\comk_\pi\\
Stacks: & \pi & \pi_0\vertor t\push\pi & {$t$ closed, $\pi_0\in\kkpiz$} \\
Processes: & p & t\star\pi& {$t$ closed}
\end{tabular}
\vspace{-10mm}
\end{equation}
Thus, the terms are the terms of the $\lambda$-calculus, augmented by a constant
$\comccc$ for call/cc, and continuation terms $\comk_\pi$ for any stack $\pi$.
A stack, in turn, is a list of closed terms terminated by an element $\pi_0$ of a 
designated set $\kkpiz$ of \emph{stack constants}. A process is a pair $t\star\pi$
of a closed term and a stack. The set of closed terms is denoted by $\kklam$, 
the set of stacks is $\kkpi$, and the set of processes is $\kkproc$. 

Krivine's machine is now defined by a transition relation $\kred$ on processes
called \emph{evaluation}.
\begin{equation}\label{eq:red-rel}
{\begin{tabular}
{c>{$}r<{$}@{$\;\star\;$}>{$}l<{$}@{$\quad\kred\quad$}>{$}r<{$}@{$\;\star\;$}>{$}l<{$}}
(push) & tu&\pi& t& u\ap\pi\\
(pop)  & (\lambda x\qdot t[x])& u\ap\pi& t[u]&\pi\\
(save) & \comccc& t\ap\pi& t&\comk_\pi\ap\pi\\
(restore) & \comk_\pi& t\ap\rho& t&\pi
\end{tabular}}
\end{equation}
The first two rules implement \emph{weak head reduction} of $\lambda$-terms,
and the third and fourth rule capture and restore continuations.

\subsection{The machine with I/O}

To incorporate I/O we modify the syntax as follows:
\begin{equation}\label{eq:grammar-extended}
\begin{array}{ll@{\;::=\;}l@{\qquad}l}
\text{Terms:} & t &  x\vertor \lambda x\!\qdot\! t\vertor t{}\,t\vertor\comccc 
\vertor\comk_\pi\vertor\readb\vertor\writeo\vertor\writez\vertor\tend
\\
\text{Stacks:} & \pi & \ve\vertor t\push\pi& t\text{ closed} \\
\text{Processes:} & p & t\star\pi\vertor\top& t\text{ closed}
\end{array}
\end{equation}
The grammar for terms is extended by constants $\readb,\writez,\writeo,\tend$
for reading, writing and termination, and in exchange the stack constants are
omitted -- $\ve$ is the empty stack. Finally there is a \emph{process constant}
$\top$ also representing termination -- the presence of both $\tend$ and $\top$
will be important in Section~\ref{sec:bisim}.

We write $\kklame$ and $\kkpie$ for the sets of terms and stacks of the
syntax with I/O, and $\kkproce$ for the set of processes. Furthermore, we denote
by $\kklamp$ the set of \emph{pure} terms, i.e.\ terms not containing any of
$\readb,\writez,\writeo,\tend$.

The operational semantics of the extended syntax is given in terms of 
\emph{execution contexts}, which are triples $(p,\iota,\omicron)$ of a process 
$p$, and a pair $\iota,\omicron\in\zos$ of binary strings representing input and 
output. On these execution contexts, we define the \emph{execution relation} 
$\kexec$ as follows:
\begin{equation}\label{eq:ex-rel}
\begin{array}
{c@{\qquad(}r@{\;\star\;}r@{,\;}r@{,\;}r@{)\quad\kexec\quad(}r@{\;\star\;}r@{,\;}r@{,\;}r@{)\qquad}l} 
(\tau)  & t & \pi & \iota & \omicron &u & \rho & \iota & \omicron & \text{whenever } t\star\pi\kred u\star\rho \\
(\lrz)  & \readb & t\ap u\ap v\ap\pi&0\iota&\omicron   & t&\pi&\iota&\omicron   \\
(\lro)  & \readb& t\ap u\ap v\ap\pi&1\iota&\omicron & u&\pi&\iota&\omicron   \\
(\lre)  & \readb& t\ap u\ap v\ap\pi&\ve&\omicron & v&\pi&\ve&\omicron   \\
(\lwz)  & \writez& t\ap \pi&\iota&\omicron & t&\pi&\iota&0\omicron   \\
(\lwo)  & \writeo& t\ap \pi&\iota&\omicron & t&\pi&\iota&1\omicron   \\
(\lend) & \tend&\pi&\iota&\omicron & \multicolumn{2}{l@{,\;}}{\top}&\iota&\omicron   \\
\end{array}
\end{equation}
Thus, if there is neither of $\readb,\writez,\writeo,\tend$ in head position,
the process is reduced as in~\eqref{eq:red-rel} without changing $\iota$ and
$\omicron$.
If $\readb$ is in head position, the computation selects one of the first three
arguments depending on whether the input starts with a 
$0$, a $1$, or is empty. $\writez$ and $\writeo$ write out $0$ and $1$, and $\tend$
discards the stack and yields $\top$, which represents successful termination.

We observe that the execution relation is \emph{deterministic}, i.e.\
for every execution context there is at most one transition possible, which
is determined by the term in head position, and in case of $\readb$ also by
the input.

\subsection{Representing functions}

We view the above formalism as a model of computation that explicitly
includes reading of input, and writing of output. 

Consequently, when thinking about expressivity we are not so much interested
in the ability of the machine to transform abstract representations of data like
`Church numerals', but rather in the functions on binary strings that 
processes can compute by reading their argument from the input, and writing
the result to the output.
\begin{definition}\label{def:compute}$ $
For $n\in\N$, $\bin(n)\in\zos$ is the base $2$ representation of $n$. 
$0$ is represented by the empty string, thus we have e.g.\
$\bin(0)=\ve$, $\bin(1)=1$, $\bin(2)=10$, $\bin(3)=11$, \dots

A process $p$ is said to \emph{implement} a partial function 
$f:\N\pto\N$, if 
$ (p,\bin(n),\ve)\wiggles(\top,\ve,\bin(f(n)))$
for all $n\in\dom(f)$.
\end{definition}
\begin{remark}
There is a stronger version of the previous definition which requires 
$(p,\bin(n),\ve)$ to diverge or block for $n\not\in\dom(f)$, and
a completeness result like Thm.~\ref{theo:turing} can be shown for the
strengthened definition as well.

We use the weaker version, since we expect the poles $\kkpole_f$ defined in 
Section~\ref{se:polespec} to be better 
behaved this way.
\end{remark}

\subsection{\texorpdfstring{$\beta$}{β}-reduction}\label{sec:beta}

To talk about contraction of single $\beta$-redexes which are not necessarily in 
head position in a process $p$, we define \emph{contexts} -- which are 
terms/stacks/processes with a single designated hole $\chole$ in term position 
-- by the following grammar:
\begin{equation}
\begin{tabular}{l>{$}r<{$}@{$\quad::=\quad$}>{$}l<{$}l}
Term contexts: & t\chole &  \chole\vertor \lambda x\!\qdot\! t\chole\vertor 
t\chole{}\,t\vertor t\,t\chole
\vertor\comk_{\pi\chole}
\\
Stack contexts: & \pi\chole & t\push\pi\chole\vertor t\chole\push\pi&{$t$,$t\chole$ closed} \\
Process contexts: & p\chole & t\chole\star\pi\vertor t\star\pi\chole
\end{tabular}
\end{equation}
Contexts are used to talk about substitution that allows capturing of variables
-- as described in~\cite[2.1.18]{barendregt1984lambda}, given a context 
$t\chole$/$\pi\chole$/$p\chole$ and a term $u$, $t[u]$/$\pi[u]$/$p[u]$ is the result of 
replacing the hole $\chole$ in $t\chole$/$\pi\chole$/$p\chole$ by $u$, allowing 
potential free variables in $u$ to be captured. We say that
$u$ is \emph{admissible} for $t\chole$/$\pi\chole$/$p\chole$, if 
$t[u]$/$\pi[u]$/$p[u]$ is a valid term/stack/process conforming
to the closedness condition for terms making up stacks.

Now we can express $\beta$-reduction as the action of contracting a single
redex: given a redex $(\lambda x\qdot u)v$ which is admissible for a context 
$t\chole$/$\pi\chole$/$p\chole$, we have
\begin{equation}
 t[(\lambda x\qdot u)v]
\to_\beta
t[u[v/x]]\qquad
 \pi[(\lambda x\qdot u)v]
\to_\beta
\pi[u[v/x]]\qquad
 p[(\lambda x\qdot u)v]
\to_\beta
p[u[v/x]],
\end{equation}
and any single $\beta$-reduction can uniquely be written this way.
$\beta$-equivalence $\bequiv$ is the equivalence relation generated by
$\beta$-reduction.

\section{Bisimulation and \texorpdfstring{$\top$}{T}-equivalence}\label{sec:bisim}

To reason efficiently about execution of processes with side effects --
in particular to show Turing completeness in Section~\ref{sec:exp} --
we want to show that although the computation model imposes a deterministic
reduction strategy, we can perform $\beta$-reduction anywhere in a
process without changing its I/O behavior.

The natural choice of concept to capture `equivalence of I/O behavior' is
\emph{weak bisimilarity} (see~\cite[Section~4.2]{milner1990operational}), and in 
order to make this applicable to processes we have to reformulate the 
operational semantics as a \emph{labeled transition system} (LTS).

We use the set $\labels=\{\lrz,\lro,\lre,\lwz,\lwo,\lend\}$
of labels, where $\lrz$, $\lro$ represent reading of a $0$ or $1$,
respectively, and $\lwz$, $\lwo$ represent writing of bits. $\lre$ represents
the unsuccessful attempt of reading on empty input, and $\lend$ represents
successful termination. The set $\actions=\labels\cup\{\tau\}$ of 
\emph{actions} contains the labels as well as the symbol $\tau$ representing
a `silent' transition, that is used to represent effect-free evaluation.

The transition system on processes is now given as follows.
\begin{equation}\label{eq:transsys}
\begin{array}
{r@{\,\star\,}l @{\,\,}c@{\,\,} r@{\,\star\,}l@{\qquad}r@{\,\star\,}l @{\,\,}c@{\,\,} r@{\,\star\,}l@{\qquad}r@{\,\star\,}l @{\,\,}c@{\,\,} r@{\,\star\,}l}
  (\lambda x\qdot t[x])& t\ap\pi&\xrightarrow{\tau}& t[u]&\pi  & \readb& t\ap u\ap v\ap\pi&\xrightarrow{\lrz}&t&\pi & \writez& t\ap\pi&\xrightarrow{\lwz}&t&\pi \\
  tu&\pi&\xrightarrow{\tau}& t& u\ap\pi                        & \readb& t\ap u\ap v\ap\pi&\xrightarrow{\lro}&u&\pi & \writeo& t\ap\pi&\xrightarrow{\lwo}&t&\pi \\
  \comccc& t\ap\pi&\xrightarrow{\tau}& t&\comk_\pi\ap\pi       & \readb& t\ap u\ap v\ap\pi&\xrightarrow{\lre}&v&\pi & \tend&\pi&\xrightarrow{\lend}&\multicolumn{2}{l@{\;}}{\top} \\
  \comk_\pi& t\ap\rho&\xrightarrow{\tau}& t&\pi
\end{array}
\end{equation}
Observe that the $\tau$-transitions are in correspondence with the transitions
of the evaluation relation~\eqref{eq:red-rel}, and the labeled transitions 
correspond to the remaining transitions of the execution 
relation~\eqref{eq:ex-rel}. 

We now recall the definition of \emph{weak bisimulation relation} 
from~\cite[Section~4.2]{milner1990operational}.
\begin{definition}$ $
\begin{itemize}
\item For processes $p,q$ we write 
$p\ttranss q$ for $p\ttrans^{\raisebox{-3pt}{$\scriptstyle*$}} q$, and for $\alpha\neq\tau$ we write 
$p\xRightarrow{\alpha}q$ for $\exists p',q'\qdot p\ttranss  p'\xrightarrow{\alpha}q'\ttranss q$.
\item
A \emph{weak bisimulation} on $\kkproce$ is a binary relation
$R\subs\kkproce^2$ such that for all $\alpha\in\actions$ and $(p,q)\in R$ 
we have 
\begin{equation}\label{eq:wbs}
\begin{aligned}
p\atrans p' &\qimp\exists q'\qdot q\atranss q'\,\wedge\, 
(p',q')\in R\quad\text{and}\\
q\atrans q' &\qimp\exists p'\qdot p\atranss p'\,\wedge\, (p',q')\in R.
\end{aligned}
\end{equation}
\item Two processes $p,q$ are called \emph{weakly bisimilar} 
(written $p\approx q$), if there exists a weak bisimulation relation $R$ with 
$(p,q)\in R$.
\end{itemize}
\end{definition}
We recall the following important properties of the weak bisimilarity
relation $\approx$.
\begin{lemma}\label{lem:wbs-erel}
Weak bisimilarity is itself a weak bisimulation, and furthermore it is an 
equivalence relation.
\end{lemma}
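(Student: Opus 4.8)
The plan is to prove the two assertions separately, handling the equivalence-relation part first since it is the standard and easier half, and then the bisimulation closure property.

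\textbf{Equivalence relation.} Reflexivity follows because the diagonal $\Delta=\{(p,p)\mid p\in\kkproce\}$ is trivially a weak bisimulation (each move is matched by itself), so $p\approx p$. Symmetry follows because the defining condition \eqref{eq:wbs} is symmetric in the two components: if $R$ is a weak bisimulation then so is $R^{-1}=\{(q,p)\mid(p,q)\in R\}$, hence $p\approx q$ implies $q\approx p$. Transitivity follows from the fact that the relational composition of two weak bisimulations is again a weak bisimulation: given weak bisimulations $R$ with $(p,q)\in R$ and $S$ with $(q,r)\in S$, one checks that $S\circ R=\{(p,r)\mid\exists q\qdot (p,q)\in R\wedge(q,r)\in S\}$ satisfies \eqref{eq:wbs}. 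The only point that needs a small argument here is that the \emph{weak} transition relations $\atranss$ still compose correctly — i.e.\ that if $q\atranss q'$ and we need to chase this through $S$, a single weak $\alpha$-move on the $q$-side can be matched by a weak $\alpha$-move on the $r$-side. This is the usual ``weak moves are simulated by weak moves'' lemma: from $(q,r)\in S$ one shows by induction on the length of the $\tau$-sequences making up $q\atranss q'$ that $r\atranss r'$ with $(q',r')\in S$. With that lemma in hand, transitivity is routine.

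\textbf{$\approx$ is itself a weak bisimulation.} By definition $\approx=\bigcup\{R\mid R\text{ is a weak bisimulation}\}$. So suppose $(p,q)\in\approx$, witnessed by some weak bisimulation $R$ with $(p,q)\in R$, and suppose $p\atrans p'$. Since $R$ is a weak bisimulation, there is $q'$ with $q\atranss q'$ and $(p',q')\in R$; but then $(p',q')\in\approx$ as well, because $R\subs\approx$. The symmetric clause is identical. Hence $\approx$ satisfies \eqref{eq:wbs}, i.e.\ $\approx$ is a weak bisimulation. (It is then automatically the \emph{largest} one.)

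\textbf{Main obstacle.} None of this is deep; the only place that requires genuine care is the ``weak simulation by weak moves'' lemma used in the transitivity argument, because the definition of weak bisimulation in \eqref{eq:wbs} only speaks about a single \emph{strong} move $p\atrans p'$ on the left being matched by a weak move $q\atranss q'$ on the right, whereas for composition one wants to match a \emph{weak} move by a weak move. Proving that promotion — by induction on the number of $\tau$-steps, treating the $\alpha$-step in the middle via the one-step clause — is the one slightly technical step; everything else is a direct unfolding of definitions. In fact, since this lemma is completely standard (see~\cite[Section~4.2]{milner1990operational}), one could simply cite it, but I would include the short induction for self-containedness.
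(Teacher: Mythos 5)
Your proof is correct, and it is the standard argument: the diagonal handles reflexivity, inverses handle symmetry, relational composition (together with the promotion lemma that a weak move $q\atranss q'$ is matched from any $S$-related $r$ by a weak move, proved by induction on the number of $\tau$-steps around the single strong $\alpha$-step) handles transitivity, and the union-of-all-bisimulations argument shows $\approx$ itself satisfies~\eqref{eq:wbs}. The paper does not reprove any of this: it simply cites Proposition~4.2.7 of~\cite{milner1990operational}, which is exactly the result you reconstructed — your closing remark that one could just cite Milner is precisely what the paper does. Nothing in the argument depends on the particular LTS~\eqref{eq:transsys}, so your generic proof goes through unchanged; spelling it out buys self-containedness, while the citation keeps the paper short, and you correctly isolate the only step needing care (matching weak moves by weak moves, since~\eqref{eq:wbs} only promises matching of single strong moves).
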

\begin{proof}
\cite[Proposition~4.2.7]{milner1990operational}
\end{proof}

To show that $\beta$-equivalent processes are bisimilar, we have to find a 
bisimulation relation containing $\beta$-equivalence. The following relation
does the job.
\begin{definition}[$\gamma$-equivalence]
$\gamma$-equivalence (written $p\gequiv q$) is the equivalence relation on 
processes that is generated by $\beta$-reduction and $\tau$-transitions.
\end{definition}
\begin{lemma}\label{lem:gamma-bisim}
$\gamma$-equivalence of processes is a weak bisimulation.
\end{lemma}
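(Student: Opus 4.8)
The plan is to exhibit a single relation $R$, built from $\gamma$-equivalence, and verify directly that it satisfies the weak-bisimulation clauses~\eqref{eq:wbs}. Since $\gequiv$ is generated by $\beta$-reduction together with $\tau$-transitions, the natural approach is to reduce the bisimulation check to two ``one-step'' cases: (i) $p \gequiv q$ because $p \to_\beta q$ (or $q \to_\beta p$) via contraction of a single redex in an arbitrary context, and (ii) $p \gequiv q$ because $p \ttrans q$ or $q \ttrans p$. Case (ii) is essentially immediate: a $\tau$-transition is itself a witnessing weak transition $\atranss$ in the appropriate direction, and the identity on states handles the other side, so the bisimulation clauses hold trivially (using that $p \ttranss p$). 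The real content is case (i), and the heart of the argument is a \emph{commutation lemma}: if $p \to_\beta q$ by contracting a redex in some context, and $p \atrans p'$ for $\alpha \in \actions$, then either $q \atranss q'$ for some $q'$ with $p' \gequiv q'$, or else the $\beta$-redex we contracted in $p$ is exactly the one consumed by the machine step (which can only happen when $\alpha = \tau$ and the redex is the head redex), in which case $q = p'$ up to the remaining structure.

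Concretely, I would first set up notation for the one-step $\beta$-reduction $p\chole[(\lambda x\qdot u)v] \to_\beta p\chole[u[v/x]]$ via a process context $p\chole$, and do a case analysis on where the hole sits relative to the head position that the LTS rules inspect. The generic case is that the contracted redex lies strictly inside a subterm that the machine rule treats as a black box --- e.g. inside an argument $t_i$ on the stack, or inside the body of a $\lambda$-abstraction that has not yet been applied, or inside the stack $\pi$ of a $\comk_\pi$, or in a non-head position of an application $tu$. In all these cases the machine transition $p \atrans p'$ ``goes through'' unchanged: $q$ admits the corresponding transition $q \atrans q'$, and $p' , q'$ differ again by contracting a single $\beta$-redex (possibly duplicated or discarded, as when the redex sits inside an argument that gets substituted for a variable occurring zero or several times), hence $p' \gequiv q'$. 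The one delicate subcase is when the contracted redex \emph{is} the head redex $(\lambda x\qdot u)\,v$ of a process $(\lambda x\qdot u)\,v \star \pi$: here $p \ttrans$ by (push) to $(\lambda x \qdot u) \star v \ap \pi$ and then by (pop) to $u[v/x] \star \pi$, while $q = u[v/x]\star\pi$ already; so $p \ttranss q$ and we can match $p$'s transitions by first running these two $\tau$-steps. Because $\gequiv$ includes $\tau$-transitions and is an equivalence, this is absorbed cleanly. Symmetric reasoning handles $q \to_\beta p$ and transitions originating from $q$.

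Having established the one-step commutation, the general case follows by a straightforward induction on the length of the $\gequiv$-derivation chain connecting $p$ and $q$, threading the matching transitions through each link; transitivity and symmetry of $\gequiv$ (Lemma is the definition) keep the invariant $p' \gequiv q'$ intact, and $\tau$-links in the chain are handled by the trivial case (ii) above. So $R := {\gequiv}$ itself is the desired weak bisimulation.

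The main obstacle is the bookkeeping in the generic subcase of case (i) when the redex sits inside an argument $v'$ that the (pop) rule substitutes for a bound variable: one must check that after substitution, contracting the ``same'' redex in each of the (zero, one, or many) copies of $v'$ still realizes the relation --- i.e. that $\gequiv$ is closed under these duplications, which it is because $\beta$-reduction can be performed independently at each copy and $\gequiv$ is an equivalence, but spelling this out requires a small auxiliary observation that $p \to_\beta q$ in a \emph{multi-hole} sense still lands inside $\gequiv$. A secondary subtlety is making sure the closedness side-conditions on terms in stacks (from grammar~\eqref{eq:grammar-extended}) are preserved under the contractions we perform, which is guaranteed by admissibility of the redex for the relevant context but should be noted. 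None of this is deep; it is the kind of case-exhaustion that is routine once the commutation lemma is correctly stated.
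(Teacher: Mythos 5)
Your proposal follows essentially the same route as the paper's proof: reduce the check to the one-step generators of $\gequiv$ ($\beta$-contractions and $\tau$-transitions), analyze the position of the contracted redex relative to the head of the process (with the head-redex case resolved by the two $\tau$-steps push/pop so that $p\ttrans\ttrans q$, and duplication or deletion of the redex under substitution absorbed by $\gequiv$), and then chain along the generating sequence. The only point worth stating explicitly in your case (ii) is that a process admitting a $\tau$-transition has no other outgoing transitions (the LTS branches only when $\readb$ is in head position), which is exactly what allows the other process to match that move by standing still.
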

\begin{proof}
It is sufficient to verify conditions~\eqref{eq:wbs} on the generators of
$\gamma$-equivalence, i.e.\ one-step $\beta$-reductions and $\tau$-transitions.
Therefore we show the following:
\begin{enumerate}
 \item if $p\ttrans q$ and $p\atrans p'$ then there exists $q'$ with 
 $q\atranss q'$ and $p'\gequiv q'$
 \item if $p\ttrans q$ and $q\atrans q'$ then there exists $p'$ with 
 $p\atranss p'$ and $p'\gequiv q'$
 \item if $p\to_\beta q$ and $p\atrans p'$ then there exists $q'$ with 
 $q\atranss q'$ and $p'\gequiv q'$
 \item if $p\to_\beta q$ and $q\atrans q'$ then there exists $p'$ with 
 $p\atranss p'$ and $p'\gequiv q'$
\end{enumerate}

In the first case, the fact that the LTS can only branch if $\readb$ is 
in head position, and this does not involve $\tau$-transitions, implies that 
$\alpha=\tau$ and $p'=q$, and we can choose $q'=q$ as well.
In the second case we have $p\atranss q'$ and thus can choose $p'=q'$.

\medskip

For cases 3 and 4, which we treat simultaneously, we have to analyze the 
structure of $p$ and $q$, which are of the form
$r[(\lambda x\qdot s)t]$ and $r[s[t/x]]$ for some context $r\chole$ (see Section~\ref{sec:beta}). 
The proof proceeds by cases on the structure of $r\chole$.

If $r\chole$ is of either of the forms $(st\star\pi)\chole$\footnote{The notation is 
meant to convey that we don't care if the hole is in $s$, $t$, or $\pi$.}, 
$\writez\star\pi\chole$, $\writeo\star\pi\chole$, $\comccc\star\pi\chole$, 
$(\comk_\pi\star\rho)\chole$, or $\tend\star\pi\chole$,
then it is immediately evident that $p$ and $q$ can perform the same unique 
transition (if any), and the results will again be $\beta$-equivalent (possibly
trivially, since the redex can get deleted in the transition).

If $r\chole$ is of the form $((\lambda y\qdot u)\star \pi)\chole$ then this is true
as well, regardless of whether the hole is in $u$ or in $\pi$ (here the redex
can be duplicated, if the hole is in the first term in $\pi$).

If $r\chole$ is of the form $\readb\star\pi\chole$ then several transitions may be possible,
but any transition taken by either of $p$ or $q$ can also be taken by the other,
and the results will again be $\beta$-equivalent.

It remains to consider $r\chole$ of the form $\chole\star\pi$. In this case, 
$p=(\lambda x\qdot s)t\star\pi$ and $q=s[t/x]\star\pi$. Here $p$ can perform the
transition $p\ttrans(\lambda x\qdot s)\star t\ap\pi$ which can be matched
by $q\ttranss q$ where we have 
$(\lambda x\qdot s)\star t\ap\pi\gequiv p\gequiv q$. In the other direction we 
have $p\atranss q'$ for every $q\atrans q'$ since $p\ttranss q$.
\end{proof}
The following definition and corollary makes the link between the 
execution relation~\eqref{eq:ex-rel} and the LTS~\eqref{eq:transsys}.
\begin{definition} 
Two execution contexts $(p,\iota,\omicron)$, $(q,\iota',\omicron')$ are called
\emph{$\top$-equivalent} (written $(p,\iota,\omicron)\tequiv(q,\iota',
\omicron')$), if for all $\iota'',\omicron''\in\zos$ we have
\[
(p,\iota,\omicron)\wiggles(\top,\iota'',\omicron'')
\qtext{iff}(q,\iota',\omicron')\wiggles(\top,\iota'',\omicron'').
\]
\end{definition}
\begin{corollary}\label{cor:tequiv}$ $
\begin{enumerate}
\item $p\approx q$ implies $(p,\iota,\omicron)\tequiv(q,\iota,\omicron)$ for all 
$\iota,\omicron\in\zos$. 
\item $(p,\iota,\omicron)\wiggles (q,\iota',\omicron')$ implies 
$(p,\iota,\omicron)\tequiv(q,\iota',\omicron')$.
\item $(p,\iota,\omicron)\tequiv(\top,\iota',\omicron')$ implies
$(p,\iota,\omicron)\wiggles(\top,\iota',\omicron')$.
\end{enumerate}
\end{corollary}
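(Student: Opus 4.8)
The plan is to establish the three items essentially in sequence, using item 1 as a lemma in disguise. For item 2, observe that a single step $(p,\iota,\omicron)\leadsto(q,\iota',\omicron')$ is either a $\tau$-step (so $p\ttrans q$ in the LTS and $\iota'=\iota$, $\omicron'=\omicron$), or one of the six labeled steps, which changes the input/output exactly by consuming or emitting a bit. In either case I want to see that the set of reachable terminal contexts is unchanged. This is most cleanly done by first proving a "zipping" statement: for fixed $p$, the map sending an input/output pair to the set of reachable $\top$-contexts is determined by $p$ up to the obvious shift; more precisely, I would prove by induction on the length of the terminating run that $(p,\iota\iota'',\omicron''')\wiggles(\top,\iota'',\omicron''\omicron''')$ for suitable $\omicron''$ iff $(q,\iota',\iota''',\dots)$ reaches the matching context, reading off the labels of the run. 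Since $\leadsto$ is deterministic, the run from $(p,\iota,\omicron)$ either passes through $(q,\iota',\omicron')$ or does not reach $\top$ at all, so item 2 follows: any terminating run from the first context factors through the second and conversely.

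For item 1, I would combine Lemma~\ref{lem:wbs-erel} with the correspondence, noted just after \eqref{eq:transsys}, between LTS transitions and execution steps. The key is a translation lemma: a run $(p,\iota,\omicron)\wiggles(\top,\iota'',\omicron'')$ exists iff in the LTS $p$ admits a trace whose $\tau$'s are erased, whose $\lrz/\lro$ labels spell out the prefix of $\iota$ consumed (with the leftover being $\iota''$), whose $\lwz/\lwo$ labels spell out the bits appended to $\omicron$ to reach $\omicron''$, possibly followed by $\lend$, after which the process is $\top$; and the $\lre$ label may only occur once $\iota$ is exhausted. This is proved by induction on run length in one direction and on trace length in the other, using determinism of $\leadsto$ to know the labels are forced. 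Given this dictionary, if $p\approx q$ then any such trace of $p$ is matched by a trace of $q$ with the same visible labels (weak bisimilarity preserves weak traces, a standard consequence of the back-and-forth clauses \eqref{eq:wbs}), hence the same terminal context is reachable from $(q,\iota,\omicron)$, and symmetrically; so $(p,\iota,\omicron)\tequiv(q,\iota,\omicron)$.

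Item 3 is the converse-flavored statement and follows from item 2 together with a small observation about $\top$. If $(p,\iota,\omicron)\tequiv(\top,\iota',\omicron')$, then since $(\top,\iota',\omicron')\wiggles(\top,\iota',\omicron')$ trivially (zero steps), $\top$-equivalence forces $(p,\iota,\omicron)\wiggles(\top,\iota',\omicron')$ directly from the definition of $\tequiv$ with $\iota''=\iota'$, $\omicron''=\omicron'$ --- there is nothing more to do. (In fact item 3 needs no machinery beyond unfolding definitions; I list it to record that $\tequiv$ detects reachability of terminal contexts exactly.)

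The main obstacle is the translation lemma between $\leadsto$-runs and LTS weak traces used in items 1 and 2: one has to be careful that the $\lre$ rule is only enabled when the input is genuinely empty, so that the bijection between runs and label sequences respects the "input is a prefix" bookkeeping, and that the process constant $\top$ --- as opposed to a stuck $\tend$ --- is the unique LTS state with no outgoing transitions that counts as success. Once that correspondence is set up cleanly, items 1--3 are short. I expect the write-up to spend most of its length on stating and proving this correspondence precisely, after which the appeal to Lemma~\ref{lem:wbs-erel} and determinism of $\leadsto$ closes everything.
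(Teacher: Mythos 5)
Your items 2 and 3 are handled exactly as in the paper: item 2 is the one-line determinism argument (any terminating run from $(p,\iota,\omicron)$ must factor through $(q,\iota',\omicron')$, and conversely by prefixing), and item 3 is just unfolding the definition of $\tequiv$ using the zero-step run $(\top,\iota',\omicron')\wiggles(\top,\iota',\omicron')$ and the fact that $\top$ has no transitions; the preliminary ``zipping'' digression in your item 2 is muddled but ultimately unnecessary, since the determinism argument you end with is the whole proof. For item 1 you take a genuinely different route from the paper: you factor through weak \emph{trace} preservation (bisimilar processes admit the same weak label sequences) plus an explicit dictionary lemma translating $\wiggles$-runs into LTS weak traces and back, whereas the paper argues directly by induction on the length of the terminating run, playing the bisimulation game one execution step at a time and converting the matching weak transition $q\atranss q^*$ back into execution steps by cases on $\alpha$. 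Your route is modular and makes the run/trace correspondence (including the $\lre$-only-on-empty-input bookkeeping) a reusable statement, at the cost of proving a somewhat heavier lemma; the paper's induction is shorter because the correspondence is consumed inline and never stated. Note that your trace argument works only because the unique $\lend$-transition lands literally in $\top$ and $\top$ is $\tau$-terminal --- you say this, and it is essential, since trace or bisimulation equivalence alone does not give reachability of the \emph{state} $\top$; relatedly, the zero-length-run case ($p=\top$ itself, where a divergent $q\approx\top$ never reaches $\top$) is glossed over in your sketch exactly as it is in the paper's ``base case is clear'', so on that point you are no worse off than the original.
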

\begin{proof}
For the first claim we show that
\[
p\approx q,\quad (p,\iota,\omicron)\wiggles(\top,\iota',\omicron')\qtext{implies}
(q,\iota,\omicron)\wiggles(\top,\iota',\omicron')
\]
by induction on the length of 
$(p,\iota,\omicron)\wiggles(\top,\iota',\omicron')$. The base case is clear. For 
the induction step assume that $(p,\iota,\omicron)\wiggle(p^*,\iota^*,
\omicron^*)\wiggles(\top,\iota',\omicron')$. If the initial transition is a
$(\tau)$ in the execution relation~\eqref{eq:ex-rel}, then have 
$p^*\cong q$, $\iota^*=\iota$ and $\omicron^*=\omicron$, and we can apply the induction hypothesis.
If the initial transition corresponds to another clause in~\eqref{eq:ex-rel},
then there is a corresponding transition $p\atrans q$ with $\alpha\in\labels$
in the LTS~\eqref{eq:transsys}, and by bisimilarity there exists a $q^*$
with $q\atranss q^*$ and $p^*\approx q^*$. Now the induction hypothesis
implies $(q^*,\iota^*,\omicron^*)\wiggles(\top,\iota',\omicron')$, and from
$q\atranss q^*$ we can deduce 
$(q,\iota,\omicron)\wiggles(q^*,\iota^*,\omicron^*)$ by cases on $\alpha$.

The second claim follows since $\wiggle$ is deterministic.

The third claim follows since $(\top,\iota',\omicron')$ can not perform any
more transitions.
\end{proof}

\section{Expressivity}\label{sec:exp}

In this section we show that the machine with I/O is Turing complete, i.e.\ that 
every computable $f:\N\pto\N$ can be implemented in the sense of 
Def.~\ref{def:compute} by a process $p$.

Roughly speaking, given $f$, we define a process $p$ that reads the input, transforms 
it into a Church numeral, applies a term $t$ that computes $f$ on the level of
Church numerals, and then writes the result out.

To decompose the task we define terms $R$ and $W$ for reading and writing, with
the properties that 
$(R\ast\pi,\bin(n),\omicron)\tequiv(\cn{n}\star\pi,\ve,\omicron)$ ($\cn{n}$ is 
the $n$-th Church numeral), and $(W\cn{n}\ast\pi,\iota,\ve)\tequiv(\top,\iota,
\bin(n))$ for all $n\in\N$.

Now the naive first attempt to combine $R$ and $W$ with the term $t$ computing
the function would be something like $W(tR)$, but this would only work if the
operational semantics was call by value. The solution is to use Krivine's
\emph{storage operators}~\cite{krivine1991lambda} which where devised precisely
to simulate call by value in call by name, and we use a variation of them.

The following definition introduces the terms $R$ and $W$, after giving
some auxiliary definitions.
\begin{definition}$ $
$E,Z,B,C,H,Y,S$ are $\lambda$-terms satisfying
\begin{equation}
\begin{array}
{l@{\,\,\bequiv\,\,}l@{\qquad}l@{\,\,\bequiv\,\,}l@{\qquad}l@{\,\,\bequiv\,\,}l}
B\,\cn{n}&\cn{2n}                 & E\,\cn{(2n)}\,s\,t & s   & Y\,t&t\,(Y\,t)\\
C\,\cn{n}&\cn{2n+1}               & E\,\cn{(2n+1)}\,s\,t & t\\
H\,\cn{n}&\cn{\mathrm{floor}(n/2)}& Z\,\cn{(0)}\,s\,t & s \\ 
S\,\cn{n}&\cn{n+1}                & Z\,\cn{(n+1)}\,s\,t & t
\end{array}\qquad
\end{equation}
for all terms $s,t$ and $n\in\N$, where $\cn{n}$ is the Church numeral $\lambda fx\qdot f^nx$.\footnote{Such terms exist by elementary 
$\lambda$-calculus, see e.g.~\cite[Chapters~3,4]{hindley2008lambda}. In 
particular, $Y$ is known as \emph{fixed point operator}.}

The terms $F$, $R$, $W$ are defined as follows:
\begin{equation}
\begin{array}
{l@{\,\,=\,\,}l@{\qquad}l@{\,\,\bequiv\,\,}l@{\qquad}l@{\,\,\bequiv\,\,}l}
F & \lambda hy\qdot h(S\,y)\footnotemark\\
R & Y{Q}\,\cn{0}\qtext{where}{Q}=\lambda xn\qdot \readb(x(B\,n))(x(C\,n))n\\
W & YV\qtext{where} V=\lambda xn\qdot Z\,n\,\tend(E\,n(\writez\,x(H\,n))(\writeo\,x(H\,n)))
\end{array}
\end{equation}
\footnotetext{This is (part of) a \emph{storage operator} for Church
numerals~\cite{krivine1991lambda}.}
\end{definition}
\vspace{-5mm}
The next three lemmas explain the roles of the terms $R, F$, and $W$.
\begin{lemma}
For all $n\in\N$, $\pi\in\kkpi$ and $\omicron\in\zos$ we have 
$(R\star\pi,\bin(n),\omicron)\tequiv(\cn{n}\star\pi,\ve,\omicron)$.
\end{lemma}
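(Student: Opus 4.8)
The plan is to prove the statement by induction on $n$, using the definition $R = YQ\,\cn{0}$ with $Q = \lambda xn\qdot \readb(x(B\,n))(x(C\,n))n$, and keeping track of an accumulator argument. Since the recursion in $R$ threads a Church numeral that accumulates the bits read so far, I would first generalize the statement to an auxiliary claim: for all $m,n\in\N$, $\pi\in\kkpi$, $\omicron\in\zos$, writing $n$ in binary as $b_k\cdots b_1$ (most significant bit first, so $\bin(n)$ is read bit by bit), we have $(YQ\,\cn{m}\star\pi,\bin(n),\omicron)\tequiv(\cn{m\cdot 2^k + n}\star\pi,\ve,\omicron)$ — or more precisely, an invariant expressing that after consuming a prefix of the input the accumulator holds the numeral encoded by that prefix. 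Instantiating $m=0$ then gives the lemma. The key tool throughout is Corollary~\ref{cor:tequiv}: part~(2) lets me replace $(p,\iota,\omicron)$ by anything it $\wiggles$-reduces to, and since $\gamma$-equivalent (hence $\beta$-equivalent) processes are weakly bisimilar (Lemma~\ref{lem:gamma-bisim}) and bisimilar processes are $\top$-equivalent on matching contexts (Corollary~\ref{cor:tequiv}(1)), I may freely perform $\beta$-reductions on the term component at any point.

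The induction step splits on the leading bit of $\bin(n)$. First, by the fixed-point property $Y\,t\bequiv t\,(Y\,t)$, we have $YQ\,\cn{m}\bequiv Q\,(YQ)\,\cn{m}\bequiv \readb((YQ)(B\,\cn{m}))((YQ)(C\,\cn{m}))\cn{m}$, so $(YQ\,\cn{m}\star\pi,\bin(n),\omicron)$ is $\top$-equivalent to $(\readb\star (YQ)(B\,\cn{m})\ap (YQ)(C\,\cn{m})\ap \cn{m}\ap\pi,\bin(n),\omicron)$ — but here I must be careful, since $\readb$ must sit directly in head position with exactly three stack arguments, so I should instead push the arguments onto the stack via the $\tau$/push transitions and then apply the appropriate $(\lrz)$, $(\lro)$, or $(\lre)$ rule. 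If $\bin(n) = 0\iota'$ the $(\lrz)$ rule fires, leaving $((YQ)(B\,\cn{m})\star\pi,\iota',\omicron)$, and since $B\,\cn{m}\bequiv\cn{2m}$ this is $\top$-equivalent to $(YQ\,\cn{2m}\star\pi,\iota',\omicron)$, to which the induction hypothesis applies. The $\bin(n)=1\iota'$ case is symmetric using $C\,\cn{m}\bequiv\cn{2m+1}$. The base case is $\bin(n)=\ve$: the $(\lre)$ rule fires, selecting the third argument $\cn{m}$, leaving $(\cn{m}\star\pi,\ve,\omicron)$, which matches the target with $n=0$ and $m$ the accumulated value.

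The main obstacle is getting the arithmetic bookkeeping of the binary encoding exactly right — in particular pinning down the precise form of the generalized invariant so that it composes correctly across the recursion. Because $\bin(0)=\ve$ and $\bin$ drops leading zeros, one has to be slightly careful that reading the bits of $\bin(n)$ left to right and accumulating via $m\mapsto 2m$ or $m\mapsto 2m+1$ does in fact reconstruct $n$ starting from $m=0$; this works because $n = \sum b_i 2^{i}$ is recovered by Horner's rule, and the empty-input case contributes nothing. A secondary, more technical point is justifying the interleaving of $\beta$-reductions with execution steps: each time I rewrite the term component by $\bequiv$ I invoke Lemma~\ref{lem:gamma-bisim} and Corollary~\ref{cor:tequiv}(1) to stay within $\tequiv$, and each genuine machine step ($\readb$-transitions, pushes) is absorbed by Corollary~\ref{cor:tequiv}(2); assembling these into a clean inductive argument requires noting that $\tequiv$ is an equivalence relation (immediate from its definition) so these rewrites chain together. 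Once the invariant is fixed, the rest is routine verification of the four transition cases.
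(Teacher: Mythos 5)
Your proposal is correct and follows essentially the same route as the paper: both unfold $YQ\,\cn{m}\bequiv\readb((YQ)(B\,\cn{m}))((YQ)(C\,\cn{m}))\,\cn{m}$, establish the three one-step $\tequiv$ facts for empty input, leading $0$, and leading $1$ (justified exactly as you do via Lemma~\ref{lem:gamma-bisim} and Corollary~\ref{cor:tequiv}), and conclude by induction on the length of the binary input, the paper merely organizing that induction through $\bin(2n)=\bin(n)0$ and $\bin(2n+1)=\bin(n)1$ rather than your explicit accumulator invariant. The one point to pin down is the one you already flag: the auxiliary claim must be stated for an arbitrary input string $\iota$ (target numeral $\cn{m\cdot 2^{|\iota|}+\mathrm{val}(\iota)}$), not only for inputs of the form $\bin(n)$, since after consuming the most significant bit the remaining input may carry leading zeros.
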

\begin{proof}
For all $n\in\N$ we have 
$Y Q\,\cn{n}
\bequiv Q(Y Q)\cn{n}
\bequiv\readb(Y Q\cn{(2n)})(Y Q\cn{(2n+1)})\cn{n}$, and thus
\begin{equation}
\begin{array}{l@{\,\,\tequiv\,\,}l}
(Y  Q\,\cn{n}\star\pi,\ve,\omicron) &(\cn{n}\star\pi,\ve,\omicron) \\
(Y  Q\,\cn{n}\star\pi,0  \iota,\omicron) &(Y  Q\,\cn{(2n)} \star\pi,\iota,\omicron) \\
(Y  Q\,\cn{n}\star\pi,1  \iota,\omicron)&(Y  Q\,\cn{(2n+1)}\star\pi,\iota,\omicron) \\
\end{array}
\end{equation}
The claim follows by induction on the length of $\bin(n)$, since 
$\bin(2n)=\bin(n)0$ for $n>0$, and $\bin(2n+1)=\bin(n)1$.
\end{proof}
\begin{lemma}
For $n\in\N$ and $t$ any closed term, we have 
$\cn{n}\,F\,t\,\cn{0}\bequiv t\,\cn{n}$.
\end{lemma}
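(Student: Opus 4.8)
The statement to prove is $\cn{n}\,F\,t\,\cn{0}\bequiv t\,\cn{n}$ for every $n\in\N$ and every closed term $t$, where $F=\lambda hy\qdot h(S\,y)$ and $S$ satisfies $S\,\cn{m}\bequiv\cn{m+1}$. The natural approach is induction on $n$, but the cleanest route is to prove a slightly generalized statement by induction, namely that $\cn{n}\,F\,t\,\cn{m}\bequiv t\,\cn{(n+m)}$ for all $m\in\N$; the desired equation is then the case $m=0$. The generalization is what makes the induction go through, because the inner argument to $F$ accumulates as we peel off applications of $F$.

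\textbf{Base case.} For $n=0$ we have $\cn{0}=\lambda fx\qdot x$, so $\cn{0}\,F\,t\,\cn{m}\bequiv (\lambda x\qdot x)\,\cn{m}\,$, wait — more carefully, $\cn{0}\,F\,t\bequiv t$ by two $\beta$-steps, hence $\cn{0}\,F\,t\,\cn{m}\bequiv t\,\cn{m}=t\,\cn{(0+m)}$.

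\textbf{Induction step.} Assume $\cn{n}\,F\,t\,\cn{m}\bequiv t\,\cn{(n+m)}$ for all $m$. Using $\cn{(n+1)}=\lambda fx\qdot f^{n+1}x$ and the standard $\beta$-calculus identity $\cn{(n+1)}\,g\bequiv \lambda x\qdot g(\cn{n}\,g\,x)$ (equivalently $\cn{(n+1)}\,g\,x\bequiv g(\cn{n}\,g\,x)$), I instantiate $g:=F$ and $x:=\cn{m}$ to get $\cn{(n+1)}\,F\,t\,\cn{m}\bequiv F\,t\,(\cn{n}\,F\,t\,\cn{m})$. Hold on — the application order is $\cn{(n+1)}\,F\,t\,\cn{m}$, so $g=F$ is applied to the two remaining arguments $t$ and $\cn{m}$; thus $\cn{(n+1)}\,F\,t\,\cn{m}\bequiv F\,(\cn{n}\,F\,t)\,\cn{m}$. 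Now unfold $F=\lambda hy\qdot h(S\,y)$: this $\beta$-reduces to $(\cn{n}\,F\,t)\,(S\,\cn{m})\bequiv(\cn{n}\,F\,t)\,\cn{(m+1)}$ using $S\,\cn{m}\bequiv\cn{(m+1)}$. Applying the induction hypothesis with $m$ replaced by $m+1$ yields $t\,\cn{(n+m+1)}=t\,\cn{((n+1)+m)}$, completing the step.

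\textbf{Main obstacle.} The only real care needed is bookkeeping about which argument $F$ binds to which variable and making sure the Church-numeral iteration identity $\cn{k}\,g\,x\bequiv g^k x$ is applied at the right $\beta$-level — i.e.\ that the accumulator for $F$ really is the second argument $y$ and the iterated function is the first argument $h$. Since everything here is pure $\lambda$-calculus and $\bequiv$ is a congruence closed under the defining equations for $S$, all steps are routine $\beta$-reductions; the generalization over $m$ is the one non-obvious ingredient, and once it is in place there is no difficulty.
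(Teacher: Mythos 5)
Your proof is correct and follows essentially the same route as the paper's: an induction on $n$ that unwinds the iterator $\cn{n}$, using $F\,h\,y\bequiv h(S\,y)$ and $S\,\cn{m}\bequiv\cn{(m+1)}$ at each step. The only difference is bookkeeping: you strengthen the induction hypothesis to $\cn{n}\,F\,t\,\cn{m}\bequiv t\,\cn{(n+m)}$, whereas the paper factors the computation as $\cn{n}\,F\,t\,\cn{0}\bequiv F^n\,t\,\cn{0}\bequiv t\,(S^n\cn{0})\bequiv t\,\cn{n}$, proving the middle step by induction on $n$.
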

\begin{proof}
This is because
$
\cn{n}\,F\,t\cn{0}\bequiv
F^n\,t\,\cn{0}\bequiv
t\,(S^n\cn{0})\bequiv
t\,\cn{n}$, where the second step can be shown by induction on $n$.
\end{proof}
\begin{lemma}
For all $n\in\N$, $\pi\in\kkpi$ and $\iota\in\zos$ we have 
$(W\cn{n}\star\pi,\iota,\ve)\tequiv (\top,\iota,\bin{(n)})$.
\end{lemma}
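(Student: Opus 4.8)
The plan is to prove, by induction on $n$, the slightly stronger statement that
$(W\cn{n}\star\pi,\iota,\omicron)\tequiv(\top,\iota,\bin(n)\,\omicron)$ for all $n\in\N$, $\pi\in\kkpi$ and $\iota,\omicron\in\zos$; the lemma is then the special case $\omicron=\ve$. The strengthening is what makes the induction go through: $W$ emits one output bit before recursing, so after that first write step the output component is no longer empty, and the induction hypothesis must be applicable with a nonempty initial output.

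I will use two facts throughout. First, $\beta$-equivalent processes are $\top$-equivalent with the same input and output: if $p\bequiv q$ then $p\gequiv q$, hence $p\approx q$ by Lemma~\ref{lem:gamma-bisim}, hence $(p,\iota,\omicron)\tequiv(q,\iota,\omicron)$ by Corollary~\ref{cor:tequiv}(1). Second, $\tequiv$ is an equivalence relation and, by Corollary~\ref{cor:tequiv}(2), every execution context is $\top$-equivalent to anything it reaches under $\wiggles$; I combine these without further comment.

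Unfolding the fixed point, $W\cn{n}\bequiv Z\,\cn{n}\,\tend\,(E\,\cn{n}(\writez\,W\,\cn{m})(\writeo\,W\,\cn{m}))$ with $m=\mathrm{floor}(n/2)$, using $Y\,V\bequiv V\,(Y\,V)$, $W\bequiv Y\,V$, and $H\,\cn{n}\bequiv\cn{m}$. For the base case $n=0$ we have $Z\,\cn{0}\,s\,t\bequiv s$, so $W\cn{0}\bequiv\tend$; since $\tend\star\pi\kexec\top$ by the $(\lend)$ clause without changing $\iota$ or $\omicron$, and $\bin(0)=\ve$, we get $(W\cn{0}\star\pi,\iota,\omicron)\tequiv(\tend\star\pi,\iota,\omicron)\tequiv(\top,\iota,\omicron)=(\top,\iota,\bin(0)\,\omicron)$. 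For $n>0$ we have $Z\,\cn{n}\,s\,t\bequiv t$, and then $E\,\cn{n}\,s\,t\bequiv s$ if $n=2m$ is even and $E\,\cn{n}\,s\,t\bequiv t$ if $n=2m+1$ is odd; so $W\cn{n}$ is $\beta$-equivalent to $\writez\,W\,\cn{m}$ in the even case and to $\writeo\,W\,\cn{m}$ in the odd case. Writing $b=0$ in the even case and $b=1$ in the odd case, the process $(\writez\,W\,\cn{m})\star\pi$ resp.\ $(\writeo\,W\,\cn{m})\star\pi$ reduces by two $(\tau)$-steps to $\writez\star W\ap\cn{m}\ap\pi$ resp.\ $\writeo\star W\ap\cn{m}\ap\pi$ and then by one $(\lwz)$ resp.\ $(\lwo)$ step to $W\star\cn{m}\ap\pi$, turning the output $\omicron$ into $b\,\omicron$. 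Since $W\cn{m}\star\pi$ also reduces (by a single $(\tau)$-step) to $W\star\cn{m}\ap\pi$, both $(W\cn{n}\star\pi,\iota,\omicron)$ and $(W\cn{m}\star\pi,\iota,b\,\omicron)$ are $\top$-equivalent to $(W\star\cn{m}\ap\pi,\iota,b\,\omicron)$, hence to each other. As $m<n$, the induction hypothesis gives $(W\cn{m}\star\pi,\iota,b\,\omicron)\tequiv(\top,\iota,\bin(m)\,b\,\omicron)$, and $\bin(m)\,b=\bin(n)$ for $n>0$; this closes the induction.

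The only delicate point is the accounting for the output component --- resisting the temptation to invoke the induction hypothesis at output $\ve$ --- which is exactly what the strengthened statement takes care of; everything else is routine $\beta$-unfolding combined with the two facts above.
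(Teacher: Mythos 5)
Your proof is correct and follows essentially the same route as the paper's: unfold $W$ via the fixed-point equation, split on the parity of $n$ using $Z$ and $E$, let the write instruction prepend one bit to the output, and recurse, concluding by induction (you induct on $n$ where the paper inducts on the length of $\bin(n)$, which amounts to the same thing). The only difference is presentational: you state explicitly the strengthened claim $(W\cn{n}\star\pi,\iota,\omicron)\tequiv(\top,\iota,\bin(n)\omicron)$ for arbitrary $\omicron$, which the paper's displayed equivalences use implicitly.
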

\begin{proof}
We have $W\,\cn{n}\bequiv V\,W\,\cn{n}\bequiv Z\,\cn{n}\,\tend(E\,\cn{n}
(\writez\,W(H\,\cn{n}))(\writeo\,W(H\,\cn{n})))$,
and therefore
\begin{equation}
\begin{array}{l@{\,\,\tequiv\,\,}l@{\,\,\tequiv\,\,}l}
( W\,\cn{0}\star\pi,\iota,\omicron)&(\tend\star\pi,\iota,\omicron)  
&(\top,\iota,\omicron)\\
( W\,\cn{(2n)}\star\pi,\iota,\omicron)&(\writez\, W(H\,\cn{(2n)})\star\pi,\iota,\omicron)&(W(\cn{n})\star\pi,\iota,0\omicron)\quad \text{for }(n>0)\\
( W\,\cn{(2n+1)}\star\pi,\iota,\omicron)&(\writeo\, W(H\,\cn{(2n+1)})\star\pi,\iota,\omicron)&(W(\cn{n})\star\pi,\iota,1\omicron).
\end{array}
\end{equation}
The claim follows again by induction on the length of $\bin(n)$.
\end{proof}
\begin{theorem}\label{theo:turing}
Every computable function $f:\N\pto\N$ can be implemented by a process 
$p$. 
\end{theorem}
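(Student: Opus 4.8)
The plan is to combine the three lemmas on $R$, $F$, and $W$ with a $\lambda$-term $t$ computing $f$ at the level of Church numerals, using the storage operator to force evaluation of the argument before the result is written. Given a computable $f:\N\pto\N$, by elementary recursion theory (again using $Y$ for general recursion, as in the definition of the auxiliary terms) there is a closed pure $\lambda$-term $t$ with $t\,\cn{n}\bequiv\cn{f(n)}$ for all $n\in\dom(f)$; I would cite e.g.~\cite[Chapters~3,4]{hindley2008lambda} for this, noting that $t\,\cn{n}$ need not have any particular behavior when $n\notin\dom(f)$. The candidate process is then
\begin{equation}
p\quad=\quad R\,(\lambda m\qdot W(m\,F\,t\,\cn{0}))\star\ve,
\end{equation}
or more precisely the process obtained by putting the term $R\,(\lambda m\qdot W(m\,F\,t\,\cn{0}))$ in head position against the empty stack. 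The intuition is: $R$ consumes $\bin(n)$ from the input and hands back the Church numeral $\cn n$; the storage operator idiom $m\,F\,t\,\cn0$ then forces this numeral, feeding it to $t$ to obtain $\cn{f(n)}$; finally $W$ writes $\bin(f(n))$ to the output.

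The key steps, for $n\in\dom(f)$, are as follows. First, applying the $R$-lemma with stack $\pi=(\lambda m\qdot W(m\,F\,t\,\cn0))\ap\ve$ and $\omicron=\ve$ gives
\begin{equation}
(R\,(\lambda m\qdot W(m\,F\,t\,\cn0))\star\ve,\;\bin(n),\;\ve)\;\tequiv\;(\cn n\star(\lambda m\qdot W(m\,F\,t\,\cn0))\ap\ve,\;\ve,\;\ve).
\end{equation}
Second, a $\beta$-reduction step (valid up to $\tequiv$ by Corollary~\ref{cor:tequiv}, since $\beta$-equivalent processes are $\gamma$-equivalent hence weakly bisimilar hence $\top$-equivalent) rewrites the right-hand process to $W(\cn n\,F\,t\,\cn0)\star\ve$. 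Third, the $F$-lemma gives $\cn n\,F\,t\,\cn0\bequiv t\,\cn n\bequiv\cn{f(n)}$, so again by Corollary~\ref{cor:tequiv} this process is $\top$-equivalent to $W\,\cn{f(n)}\star\ve$. Fourth, the $W$-lemma with $\iota=\ve$ yields $(W\,\cn{f(n)}\star\ve,\ve,\ve)\tequiv(\top,\ve,\bin(f(n)))$. Chaining these $\tequiv$-steps (which compose since $\tequiv$ is an equivalence relation) gives $(p,\bin(n),\ve)\tequiv(\top,\ve,\bin(f(n)))$, and then part~3 of Corollary~\ref{cor:tequiv} converts this to the required reduction $(p,\bin(n),\ve)\wiggles(\top,\ve,\bin(f(n)))$, which is exactly Definition~\ref{def:compute}.

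The only genuine content beyond bookkeeping is the existence of the pure $\lambda$-term $t$ representing $f$ on Church numerals, which is classical; everything else is gluing together the three lemmas through $\tequiv$. The main point to be careful about is that the composition is legitimate: the lemmas are stated for arbitrary stacks $\pi$ and arbitrary $\omicron$ (resp. $\iota$), so they apply verbatim in the nested context above, and the passage from $\bequiv$ on terms to $\tequiv$ on the surrounding processes is exactly what Corollary~\ref{cor:tequiv} (via Lemma~\ref{lem:gamma-bisim}) provides. I expect the subtlest point to be making sure the storage-operator step genuinely forces the numeral in call-by-name — but this is precisely what the $F$-lemma encapsulates, so in the write-up it reduces to citing that lemma.
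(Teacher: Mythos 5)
Your overall plan --- read with $R$, force the numeral with the storage operator, compute with $t$, write with $W$, and glue everything together via $\tequiv$ and Corollary~\ref{cor:tequiv} --- is the paper's plan, but your concrete process is broken, and the break is exactly at your second step. The $R$-lemma returns the numeral \emph{in head position}: $(R\star\pi,\bin(n),\omicron)\tequiv(\cn{n}\star\pi,\ve,\omicron)$. With $p=R\,(\lambda m\qdot W(m\,F\,t\,\cn{0}))\star\ve$ the stack after reading is $(\lambda m\qdot W(m\,F\,t\,\cn{0}))\ap\ve$, so the process you reach is $\cn{n}$ \emph{applied to} $\lambda m\qdot W(m\,F\,t\,\cn{0})$, not that abstraction applied to $\cn{n}$; the redex $(\lambda m\qdot W(m\,F\,t\,\cn{0}))\,\cn{n}$ which your ``$\beta$-reduction step'' contracts does not occur in that process. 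What actually happens is $\cn{n}\star(\lambda m\qdot W(m\,F\,t\,\cn{0}))\ap\ve\kred(\lambda x\qdot(\lambda m\qdot W(m\,F\,t\,\cn{0}))^{n}x)\star\ve$, a $\lambda$-abstraction facing the empty stack, which is stuck: no write and no $\tend$ is ever executed, so $(p,\bin(n),\ve)$ never reaches $\top$ and $p$ fails to implement $f$ at every $n\in\dom(f)$. (A minor additional point: your first step needs one push $\tau$-transition before the $R$-lemma applies, but that is harmless by Corollary~\ref{cor:tequiv}-2.)

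This head-versus-argument orientation is precisely what the storage operator $F$ is for, and it is why the paper's process is $R\star F\ap t\ap\cn{0}\ap F\ap W\ap\cn{0}\ap\ve$: by the $F$-lemma, $\cn{n}\,F\,t\,\cn{0}\bequiv t\,\cn{n}$ converts the numeral sitting in head position into an argument of $t$, and the second block $F\ap W\ap\cn{0}$ repeats the trick for the computed value, $\cn{f(n)}\,F\,W\,\cn{0}\bequiv W\,\cn{f(n)}$. Your construction can be repaired minimally by inserting one such layer in front of your continuation, e.g.\ $p=R\star F\ap(\lambda m\qdot W(m\,F\,t\,\cn{0}))\ap\cn{0}\ap\ve$: after reading one has, up to $\tau$-steps and $\bequiv$, $\cn{n}\,F\,(\lambda m\qdot W(m\,F\,t\,\cn{0}))\,\cn{0}\bequiv W(\cn{n}\,F\,t\,\cn{0})\bequiv W(t\,\cn{n})\bequiv W\,\cn{f(n)}$, and from there your remaining steps and the final appeal to Corollary~\ref{cor:tequiv}-3 go through as you wrote them.
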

\begin{proof}
From~\cite[Thm.~4.23]{hindley2008lambda} we know that there exists a term $t$ 
with $t\,\cn{n}\bequiv\cn{f(n)}$
for $n\in\dom(f)$. The process $p$ is given by $R\star F\ap t\ap \cn{0}\ap F\ap 
W\ap \cn{0}$.
Indeed, for $n\in\dom(f)$ we have
\begin{equation}
\begin{array}{l@{\,\,\tequiv\,\,}l@{\,\,\tequiv\,\,}l}
 (R\star F\ap t\ap \cn{0}\ap F\ap W\ap \cn{0},\bin(n),\ve)
&(\cn{n}\star F\ap t\ap \cn{0}\ap F\ap W\ap \cn{0},\ve,\ve)
&(\cn{n}\, F t \cn{0}\star F\ap W\ap \cn{0},\ve,\ve)
\\&(t\,\cn{n}\star F\ap W\ap \cn{0},\ve,\ve)
&(\cn{f(n)}\star F\ap W\ap \cn{0},\ve,\ve)
\\&(W\,\cn{f(n)}\star\ve,\ve,\ve)&(\top,\ve,\bin(f(n)))
\end{array}
\end{equation}
and we deduce $(R\star F\ap t\ap \cn{0}\ap F\ap W\ap \cn{0},\bin(n),\ve)\wiggles (\top,\ve,\bin(f(n)))$ by Corollary~\ref{cor:tequiv}-3.
\end{proof}

\section{Realizability and triposes}\label{sec:real}

The aim of this section is to describe how the presence of I/O instructions 
allows to define new realizability models, which we do in the categorical 
language of triposes and toposes~\cite{vanoosten2008realizability}.

In Subsection~\ref{sec:krivinereal} we give a categorical reading of Krivine's 
realizability interpretation as described 
in~\cite{krivine2009realizabilitypanorama} and show how it gives rise to 
triposes. In Subsection~\ref{sec:extended-real} we show how the definitions
can be adapted to the syntax and machine with I/O, and how this allows us
to define new realizability models from specifications.

The interpretation of Krivine realizability in terms of triposes is due to 
Streicher~\cite{streicher2013krivine}, and has further been 
explored in~\cite{ferrer2014ordered}. However, the presentation here is more 
straightforward since the constructions and proofs do not rely on
\emph{ordered combinatory algebras}, but directly rephrase Krivine's 
constructions categorically.

\subsection{Krivine's classical realizability}\label{sec:krivinereal}

\emph{Throughout this subsection we work with the 
syntax~\eqref{eq:grammar-basic} without I/O instructions but with stack 
constants.}

Krivine's realizability interpretation is always given relative to a set
of processes called a `pole' -- the choice of pole determines the 
interpretation.
\begin{definition}\label{def:pole}
A \emph{pole} is a set $\kkpole\subs\kkproc$ of processes which is 
\emph{saturated}, in the sense that $p\in\kkpole$ and $p'\kred p$ implies 
$p'\in\kkpole$.
\end{definition}
As Miquel~\cite{miquel2011existential} demonstrated, the pole can be seen as 
playing the role of the parameter $R$ in Friedman's negative 
translation~\cite{friedman1977classically}. 
In the following we assume that a pole $\kkpole$ is fixed.

A \emph{truth value} is by definition a set $S\subs\kkpi$ of stacks. 
Given a truth value $S$ and a term $t$, we write $t\real S$ -- and say `$t$
realizes $S$' -- if $\forall \pi\in S\qdot t\star\pi\in\kkpole$.
We write $S\porth=\setof{t\in\kklam}{t\real S}$ for the set of realizers of 
$\kkpi$. So unlike in Kleene realizability the elements of a truth value are
not its realizers -- they should rather be seen as `refutations', and indeed
larger subsets of $\kkpi$ represent `falser' truth values\footnote{For this 
reason, Miquel~\cite{miquel2011existential,miquel2011forcing} calls the elements 
of $P(\kkpi)$ \emph{falsity values}.}; in particular falsity
is defined as
\begin{equation}\label{eq:falsity}
\qquad\bot\quad=\quad\kkpi.
\end{equation}
Given truth values
$S,T\subs\kkpi$, we define the implication $S\imp T$ as follows.
\begin{equation}\label{eq:implication}
S\imp T \quad=\quad S\porth \ap T = \setof{s\ap \pi}{s\real S,\pi\in T}
\end{equation}
With these definitions we can formulate the following lemma, which relates 
refutations of a truth value $S$ with realizers of its negation.
\begin{lemma}\label{lem:kpi-real}
Given $\pi\in S\subs \kkpi$, we have $\comk_\pi\real S\imp\bot$.
\end{lemma}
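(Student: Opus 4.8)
The plan is to unfold the definitions \eqref{eq:falsity} and \eqref{eq:implication} and then apply the (restore) clause of \eqref{eq:red-rel} together with saturation of the pole. By definition $\bot=\kkpi$ and $S\imp\bot=S\porth\ap\bot=\setof{s\ap\rho}{s\real S,\ \rho\in\kkpi}$, so the statement $\comk_\pi\real S\imp\bot$ unfolds to the following: for every $s$ with $s\real S$ and every stack $\rho\in\kkpi$ we have $\comk_\pi\star s\ap\rho\in\kkpole$. I deliberately write $\rho$ for the stack ranging over $\bot$, to keep it apart from the stack $\pi\in S$ that is frozen inside the continuation term $\comk_\pi$.

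So fix $s$ with $s\real S$ and an arbitrary $\rho\in\kkpi$. The (restore) clause gives $\comk_\pi\star s\ap\rho\;\kred\;s\star\pi$. Since $\pi\in S$ by hypothesis and $s\real S$, the definition of realization yields $s\star\pi\in\kkpole$. As $\kkpole$ is saturated and $\comk_\pi\star s\ap\rho\kred s\star\pi\in\kkpole$, we conclude $\comk_\pi\star s\ap\rho\in\kkpole$. Since this holds for all such $s$ and $\rho$, we obtain $\comk_\pi\real S\imp\bot$ as claimed.

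I do not expect any genuine obstacle: the result is the familiar fact that a continuation built from a refutation $\pi$ of $S$ realizes the negation of $S$, and it follows from a single reduction step plus the anti-reduction closure of the pole. The only point requiring a little care is notational — keeping the captured stack $\pi$ (which must lie in $S$) distinct from the generic stack over which $\bot=\kkpi$ ranges — so that the one-line computation is unambiguous.
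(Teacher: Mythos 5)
Your proof is correct and follows exactly the paper's argument: unfold $S\imp\bot$, apply the (restore) step $\comk_\pi\star s\ap\rho\kred s\star\pi$, and conclude from $s\star\pi\in\kkpole$ by saturation of the pole. The only difference is that you spell out the saturation step explicitly, which the paper leaves implicit.
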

\begin{proof}
We have to show that $\comk_\pi\star t\ap\rho\in\kkpole$ for all $t\real S$ and 
$\rho\in\kkpi$. This is because $\comk_\pi\star t\ap\rho\kred t\star\pi$, where
$\pi\in S$ and $t\real S$.
\end{proof}

A \emph{(semantic) predicate} on a set $I$ is a function $\varphi:I\toppi$
from $I$ to truth values. On semantic predicates we define the basic logical 
operations of falsity, implication, universal quantification, and reindexing by
\begin{equation}\label{eq:connectives}
\begin{array}{r@{\quad=\quad}ll}
\bot(i) & \kkpi&\text{\it (falsity)}\\[1mm]
 (\varphi\imp\psi)(i) & \varphi(i)\imp\psi(i)=\varphi(i)\porth\ap\psi(i)&\text{\it (implication)}\\[1mm]
\forall_f(\theta)(i) &\bigcup_{f(j)=i} \theta(j)& \text{\it (universal quantification)}\\
f^*\varphi & \varphi\circ f&\text{\it (reindexing)}\\
\end{array}
\end{equation}
for $\varphi,\psi:I\toppi$, $\theta:J\toppi$ and $f:J\to I$. Thus, for any
function $f:J\to I$, the function $\forall_f$ (called `universal quantification 
along $f$') maps predicates
on $J$ to predicates on $I$\footnote{The usual $\forall x\vtp A$ from predicate 
logic corresponds to taking $f$ to be a projection map $\pi_1:\Gamma\times A\to 
\Gamma$, see e.g.\ \cite[Chapter~4]{jacobs2001categorical}.}, and the function 
$f^*$ (called `reindexing along $f$') maps predicates on $I$ to predicate on 
$J$. We write $\forall_I$ for universal quantification along the terminal 
projection $I\to 1$.

Next, we come to the concept of `truth/validity' of the interpretation. We can
not simply call a truth value `true' if it has a realizer -- this would lead
to inconsistency as soon as the pole $\kkpole$ is nonempty, since 
$\comk_\pi t\real\bot$ for any process $t\star\pi\in\kkpole$. The solution
is to single out a set $\kkpl$ of `well-behaved' realizers called
`proof-like terms'. We recall the definition 
from~\cite{krivine2009realizabilitypanorama}.
\begin{definition}\label{def:prooflike}
The set $\kkpl\subs\kklam$ of \emph{proof-like terms} is the set of terms $t$
that do not contain any continuations $\comk_\pi$.
\end{definition}
As Krivine~\cite[pg.~2]{krivine2009realizabilitypanorama} points out, $t$ is a
proof-like term if and only if it does not contain any stack constant
$\pi_0\in\kkpiz$ (since continuation terms $\comk_\pi$ necessarily contain a 
stack constant at the end of $\pi$, and conversely stacks can only occur as
continuations in a term).

Proof-like terms give us a concept of logical validity -- a truth value $S$ is 
called \emph{valid}, if there exists a proof-like term $t$ with $t\real S$.

With this notion, we are ready to define the centerpiece of the realizability 
model, which is the \emph{entailment relation} on predicates.
\begin{definition}$ $
For any set $I$ and integer $n$, the $(n+1)$-ary entailment
relation $(\ent_I)$ on predicates on $I$ is defined by 
\[\varphi_1\dots\varphi_n\ent_I\psi\qtext{if and only if} 
\exists t\in\kkpl\qdot t\real\forall_I(\varphi_1\imp\dots\imp\varphi_n\imp \psi).\]
If the right hand side proposition holds, we call $t$ a \emph{realizer} of 
$\varphi_1\dots\varphi_n\ent_I\psi$.
\end{definition}
Thus, $\varphi_1\dots\varphi_n\ent_I\psi$ means that the truth value \
$\forall_I(\varphi_1\imp\dots\imp\varphi_n\imp \psi)$ is valid. 
More explicitly this can be written out as 
\begin{equation}\label{eq:entailment-explicit}
\exists t\in\kkpl\fa i\in I,u_1\in\varphi_1(i)\porth,\dots,u_n\in\varphi_n(i)\porth,
\pi\in\psi(i)\qdot
t\star u_1\ap\dots\ap u_n\ap\pi\in\kkpole.
\end{equation}
With the aim to show that the semantic predicates form a tripos in 
Theorem~\ref{theo:tripos}, we now prove that the entailment ordering models the 
logical rules in Table~\eqref{tab:erelrules}. The first eight rules form a
standard natural deduction system for (the $\bot,\imp$ fragment of) classical 
propositional logic, but for universal quantification we give categorically
inspired rules that bring us quicker to where we want, and in particular
avoid having to deal with variables.
\newcommand{\rname}[1]{\RightLabel{\textrm{(#1)}}}
\begin{table}[h]
\noindent\framebox[\textwidth]{
\parbox{.96\textwidth}{
\begin{center}
\begin{tabular}{@{}cc@{}}
\hspace{-.34cm}
\AXC{$\phantom{|}$}
\rname{Ax}
\UIC{$\varphi\enti\varphi$}
\DisplayProof
&
\AXC{$\Gamma\enti\bot$}
\rname{$\bot$E}
\UIC{$\Gamma\enti\psi$}
\DisplayProof
\\[\bigskipamount]
\AXC{$\Gamma,\varphi\enti\psi$}
\rname{$\imp$I}
\UIC{$\Gamma\enti\varphi\imp\psi$}
\DP
&
\AXC{$\Gamma\enti\psi$}
\AXC{$\Delta\enti\psi\imp\theta$}
\rname{$\imp$E}
\BIC{$\Gamma,\Delta\enti\theta$}
\DP 
\\[\bigskipamount]
\AXC{$\Gamma\enti \psi$}
\rname{S}
\UIC{$\sigma(\Gamma)\enti \psi$}
\DP 
&
\AXC{$\phantom{|}$}
\rname{PeL}
\UIC{$\Delta\csep\Gamma\enti ((\psi\imp\bot)\imp\psi)\imp\psi$}
\DP\\[\bigskipamount]
\AXC{$\Gamma\enti \psi$}
\rname{W}
\UIC{$A,\Gamma\enti \psi$}
\DP &
\AXC{$A,A,\Gamma\enti \psi$}
\rname{C}
\UIC{$A,\Gamma\enti \psi$}
\DP \\[\bigskipamount]
\AXC{$f^*\Gamma\entj\xi$}
\rname{$\forall$I}
\UIC{$\Gamma\enti\forall_f\xi$}
\DP
&
\AXC{$\Gamma\enti\forall_f\xi$}
\rname{$\forall$E}
\UIC{$f^*\Gamma\entj\xi$}
\DP
\end{tabular}
\end{center}
$\varphi,\psi,\theta$ are predicates on $I$, i.e.\ functions
$I\toppi$, and $\Gamma\equiv\varphi_1\dots\varphi_n$ and $\Delta\equiv\psi_1\dots\psi_m$
are lists of such predicates. $\xi$ is a predicate on $J$, and $f:J\to I$ is a function. $\sigma$
is a permutation of $\{1,\dots,n\}$. $f^*\Gamma$ is an abbreviation for
$f^*\varphi_1\dots f^*\varphi_n$, and $\sigma(\Gamma)$ is an abbreviation for
$\varphi_{\sigma(1)}\dots\varphi_{\sigma(n)}$.
\medskip}}
\caption{Admissible rules for the entailment relation.}\label{tab:erelrules}
\end{table}
\begin{lemma}\label{lem:rules-valid}
The rules displayed in Table~\ref{tab:erelrules} are admissible for the 
entailment relation, in the sense that if the hypotheses hold then so 
does the conclusion.
\end{lemma}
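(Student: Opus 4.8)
The plan is to verify each rule of Table~\ref{tab:erelrules} separately by exhibiting an explicit proof-like realizer of its conclusion, built from the hypothesized realizers by $\lambda$-abstraction and application. Throughout one unfolds the definition of the $(n+1)$-ary entailment relation via~\eqref{eq:entailment-explicit}: to realize $\varphi_1\dots\varphi_n\enti\psi$ one must produce $t\in\kkpl$ such that for every $i\in I$, every $u_k\in\varphi_k(i)\porth$ and every $\pi\in\psi(i)$ one has $t\star u_1\ap\dots\ap u_n\ap\pi\in\kkpole$. Note that since the predicates are indexed by a \emph{fixed} set $I$ and there are no object-language variables for individuals, the index $i$ is simply carried along as a parameter and never manipulated by the realizer — this is what makes the quantifier rules $(\forall\mathrm{I})$, $(\forall\mathrm{E})$ essentially trivial: $f^*\Gamma\entj\xi$ and $\Gamma\enti\forall_f\xi$ unfold, after substituting the definitions of $\forall_f$ and $f^*$, to \emph{literally the same} statement (a union over the fibre of $f$ appears on one side, a universal over $J$ on the other, but both amount to $\forall j$), so the identity term $\lambda x\qdot x$ realizes both directions.

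For the propositional rules I would proceed as follows. $(\mathrm{Ax})$: the realizer is $\lambda x\qdot x$, using that $u\star\pi\in\kkpole$ whenever $u\real\varphi(i)$ and $\pi\in\varphi(i)$, by the very definition of $\real$. $(\mathrm{W})$ and $(\mathrm{C})$: from a realizer $t$ of $\Gamma\enti\psi$ take $\lambda a\qdot t$ (dropping the extra argument $a$) for weakening, and $\lambda a\qdot t\,a\,a$ (duplicating) for contraction; one checks the pushed arguments land correctly using the $(\mathrm{push})$/$(\mathrm{pop})$ rules of~\eqref{eq:red-rel} and saturation of $\kkpole$. $(\mathrm{S})$: compose the given realizer with a fixed permutation combinator. $(\imp\mathrm{I})$ is essentially a currying identity: a realizer of $\Gamma,\varphi\enti\psi$ is already a realizer of $\Gamma\enti\varphi\imp\psi$ once one unfolds~\eqref{eq:implication}, since $(\varphi\imp\psi)(i)=\varphi(i)\porth\ap\psi(i)$ means an element of $(\varphi\imp\psi)(i)$ is of the form $u\ap\pi$ with $u\real\varphi(i)$, $\pi\in\psi(i)$ — so nothing needs to change, or at most a reassociating $\lambda$. $(\imp\mathrm{E})$: given realizers $t$ of $\Gamma\enti\psi$ and $s$ of $\Delta\enti\psi\imp\theta$, the realizer of $\Gamma,\Delta\enti\theta$ is roughly $\lambda\vec g\,\vec h\qdot s\,\vec h\,(t\,\vec g)$, feeding the $\psi$-realizer produced by $t$ as the first argument to what $s$ produces; the verification uses $(\mathrm{push})$ and that $t\,\vec g\real\psi(i)$. $(\bot\mathrm{E})$: since $\bot(i)=\kkpi$ and $\psi(i)\subs\kkpi$, a realizer of $\Gamma\enti\bot$ is automatically a realizer of $\Gamma\enti\psi$ — again the identity works. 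Finally $(\mathrm{PeL})$ is the Peirce-law axiom realized by $\comccc$: one must check $\comccc\star u\ap\pi\in\kkpole$ for $u\real(\psi\imp\bot)\imp\psi$ and $\pi\in\psi(i)$; by $(\mathrm{save})$ this reduces to $u\star\comk_\pi\ap\pi$, and by Lemma~\ref{lem:kpi-real} we have $\comk_\pi\real\psi\imp\bot$ (as $\pi\in\psi(i)$), so $u\star\comk_\pi\ap\pi\in\kkpole$ follows; here one also notes $\comccc\in\kkpl$ since it contains no continuation, which is exactly why Peirce's law is validated. A small point: the displayed statement has $\Delta\csep\Gamma$ on the left of $(\mathrm{PeL})$, i.e.\ arbitrary extra hypotheses, which are simply absorbed by prefixing the realizer with dummy $\lambda$-abstractions.

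The main obstacle — to the extent there is one — is purely bookkeeping: making sure every realizer one writes down genuinely lies in $\kkpl$ (no $\comk_\pi$ and no stack constant, which rules out nothing we use: all our combinators are pure $\lambda$-terms and the one non-$\lambda$ constant is $\comccc$, which is proof-like), and carefully matching up the lists $\vec g$, $\vec h$ of pushed arguments with the context lists $\Gamma$, $\Delta$ so that the reduction sequences produced by iterated $(\mathrm{push})$/$(\mathrm{pop})$ really terminate in a process of the form $t'\star u_1\ap\dots\ap\pi$ to which the hypothesis applies, using saturation of $\kkpole$ at each $\kred$ step. None of the individual verifications is deep; the content of the lemma is that the evident combinators do the job, and the proof is just the catalogue of checks sketched above.
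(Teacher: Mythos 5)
Your proposal is correct and follows essentially the same route as the paper's proof: rule-by-rule exhibition of explicit proof-like realizers, with $\lambda x\qdot x$ for (Ax), the same composed term for ($\imp$E), $\comccc$ for (PeL) via Lemma~\ref{lem:kpi-real}, and the observation that ($\imp$I), ($\forall$I), ($\forall$E) (and ($\bot$E)) need no new realizer because hypothesis and conclusion unfold to statements with the same realizers. The only slip is phrasing: where you say ``the identity term realizes both directions'' of the quantifier rules (and ``the identity works'' for ($\bot$E)), what is meant --- and what your own preceding sentence correctly states --- is that the hypothesis's realizer itself is reused unchanged, not that $\lambda x\qdot x$ is the realizer.
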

\begin{proof}
 (Ax) rule: The conclusion is realized by $\lambda x\qdot x$.

($\bot$E) rule: every realizer of the hypothesis is also a realizer of the
conclusion, since $\psi(i)\subs\bot(i)=\kkpi$ for all $i\in I$.

($\imp$I) rule: the hypothesis and the conclusion have precisely the same 
realizers.

($\imp$E) rule: if $t$ realizes $\Delta\enti\psi\imp\theta$ and $u$ realizes 
$\Gamma\ent_i\psi$ then $\Gamma,\Delta\ent_i\theta$ is realized by
$\lambda x_1\dots x_ny_1\dots y_m\qdot ty_1\dots y_m(u x_1\dots x_n)$.

(PeL) rule (`Peirce's law'): the conclusion is realized by $\comccc$. 
To see this, let $i\in I$, $t\real(\psi(i)\imp\bot)\imp\psi(i)$, and 
$\pi\in\psi(i)$.
Then we have $\comccc\star t\ap\pi\kred t\star\comk_\pi\ap\pi$, which is in $\kkpole$
since $\comk_\pi\ap\pi\in(\psi(i)\imp\bot)\imp\psi(i)$ by 
Lemma~\ref{lem:kpi-real} and the definition~\eqref{eq:connectives} of 
implication.

(W) rule: if $t$ realizes $\Gamma\enti\psi$, then $\lambda x\qdot t$ realizes
$A,\Gamma\enti\psi$.

(C) rule: if $t$ realizes $A,A,\Gamma\enti\psi$, then $\lambda x\qdot txx$
realizes $A,\Gamma\enti\psi$.

(S) rule: if $t$ realizes $\Gamma\enti\psi$, then 
$\lambda x_{\sigma(1)}\dots x_{\sigma(n)}\qdot tx_1\dots x_n$ realizes 
$\sigma(\Gamma)\enti\psi$.

($\forall$I) and ($\forall$E) rules: $\Gamma\enti\forall_f\xi$ and 
$f^*\Gamma\entj\xi$ have exactly the same realizers. 
Indeed, a realizer of $f^*\Gamma\ent_J \xi$ is a term $t$ satisfying
\[
\fa j\in J,u_1\in\varphi_1(f(j))\porth,\dots,u_n\in\varphi_n(f(j))\porth,
\pi\in\xi(j)\qdot
t\star u_1\ap\dots\ap u_n\ap\pi\in\kkpole,
\]
and a realizer of $\Gamma\entj\forall_f\xi$ is a term $t$ satisfying
\[
\textstyle\fa i\in I,u_1\in\varphi_1(i)\porth,\dots,u_n\in\varphi_n(i)\porth,
\pi\in\bigcup_{f(j)=i}\xi(j)\qdot
t\star u_1\ap\dots\ap u_n\ap\pi\in\kkpole,
\]
and both statements can be rephrased as a quantification over pairs $(i,j)$ with
$f(j)=i$.
\end{proof}
We only defined the propositional connectives $\bot,\imp$, since 
$\top,\wedge,\vee,\neg$ can be encoded as follows:
\begin{equation}\label{eq:encodings}
\begin{array}{r@{\defcon}l@{\qquad}r@{\defcon}l@{\qquad}r@{\defcon}l}
\top &   \bot\imp\bot & \neg\varphi &  \varphi\imp\bot 
\\
\varphi\wedge\psi &  (\varphi\imp(\psi\imp\bot))\imp\bot&\varphi\vee\psi &  
(\varphi\imp\bot)\imp\psi \\
\end{array}
\end{equation}
With these encodings it is routine to show the following.
\begin{lemma}\label{lem:classical-nd}
With the connectives $\top,\wedge,\vee,\neg$ encoded as in~\eqref{eq:encodings}, 
the rules of propositional classical natural deduction (e.g.\ system Nc 
in~\cite[Section~2.1.8]{troelstra2000basic}) are derivable from the rules in
Table~\ref{tab:erelrules}.
\end{lemma}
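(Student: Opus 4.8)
The plan is to treat $\top,\neg,\wedge,\vee$ as mere abbreviations for the $\bot,\imp$-formulas given in~\eqref{eq:encodings}, and to derive, for each of them, the corresponding introduction rule, elimination rule(s), and (where applicable) classical principle of Nc from the eight propositional rules of Table~\ref{tab:erelrules}, relying throughout on (W), (C), (S) so that the left-hand context of $\enti$ may be manipulated as a finite set.

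First I would record a few derived rules that turn the rest into bookkeeping. From (Ax) together with (W), (S) one obtains the generalized axiom $\Gamma,\varphi\enti\varphi$; chaining ($\imp$I) into ($\imp$E) gives a cut rule ($\Gamma\enti\psi$ and $\Delta,\psi\enti\theta$ yield $\Gamma,\Delta\enti\theta$); ($\imp$E) specialized to $\theta=\bot$ is the rule $\neg$E, and ($\imp$I) is already $\neg$I; $\top$I is just $\bot\enti\bot$ followed by ($\imp$I); and ex falso is the rule ($\bot$E) itself. The one genuinely classical ingredient is double-negation elimination $\neg\neg\varphi\enti\varphi$: the formula $\neg\neg\varphi=(\varphi\imp\bot)\imp\bot$ entails $(\varphi\imp\bot)\imp\varphi$ (compose with an instance of ($\bot$E) via ($\imp$I)), and $((\varphi\imp\bot)\imp\varphi)\imp\varphi$ is exactly the (PeL) axiom at $\psi:=\varphi$, so a single ($\imp$E) delivers $\neg\neg\varphi\enti\varphi$. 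From this, reductio ($\Gamma,\neg\varphi\enti\bot$ implies $\Gamma\enti\varphi$) follows by ($\imp$I) and a cut.

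With these in hand the connective rules come out by routine derivations, which I would merely indicate. For $\varphi\wedge\psi=(\varphi\imp(\psi\imp\bot))\imp\bot$: $\wedge$I is two nested ($\imp$E) steps against the generalized axiom for $\varphi\imp(\psi\imp\bot)$, then ($\imp$I); for $\wedge$E one checks $\varphi\wedge\psi,\neg\varphi\enti\bot$ (using $\neg\varphi\enti\varphi\imp\neg\psi$, and symmetrically $\neg\psi\enti\varphi\imp\neg\psi$) and concludes via reductio and $\neg\neg$-elimination. For $\varphi\vee\psi=\neg\varphi\imp\psi$: both $\vee$-introductions are immediate from ($\imp$I), ($\bot$E) and weakening; for $\vee$E, given $\Gamma\enti\varphi\vee\psi$, $\Delta,\varphi\enti\theta$, $\Sigma,\psi\enti\theta$, one turns $\Delta,\varphi\enti\theta$ into $\Delta,\neg\theta\enti\neg\varphi$, feeds this into $\Gamma\enti\neg\varphi\imp\psi$ by ($\imp$E), cuts against $\Sigma,\psi\enti\theta$, and contracts two copies of $\neg\theta$ to get $\Gamma,\Delta,\Sigma,\neg\theta\enti\bot$, whence reductio gives the conclusion. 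Finally the classical rule of Nc (double-negation elimination, or reductio) is the derived rule obtained above.

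I expect the only step requiring any thought to be the extraction of $\neg\neg\varphi\enti\varphi$ from Peirce's law, since \emph{every} elimination rule for the negatively encoded $\wedge$ and $\vee$ routes through it; once that derived rule and the cut/weakening machinery are in place, the remaining derivations are purely mechanical, and I would present them as a list of one-line recipes rather than spelling out derivation trees.
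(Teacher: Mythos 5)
Your proposal is correct, and in fact the paper offers no written proof of this lemma at all: it is stated as ``routine to show'' immediately after the encodings~\eqref{eq:encodings}, so your derivations supply exactly the verification the paper leaves implicit. The one step that genuinely needs the classical rule --- obtaining $\neg\neg\varphi\enti\varphi$ by deriving $\neg\neg\varphi\enti(\varphi\imp\bot)\imp\varphi$ via ($\bot$E)/($\imp$I) and discharging it against the (PeL) axiom with ($\imp$E) --- is handled correctly, and the remaining rules for the encoded $\wedge$ and $\vee$ (and reductio, cut, weakening bookkeeping with (W), (C), (S)) go through as you indicate.
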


With this we can show that for any set $I$, the binary part of the
entailment relation makes $P(\kkpi)^I$ into a \emph{Boolean prealgebra}.
\begin{definition} \label{def:boolean-algebra}
A \emph{Boolean prealgebra} is a 
preorder
$(B,\leq)$ which
\begin{enumerate}
\item has binary \emph{joins} and \emph{meets} -- denoted by $x\vee y$ and 
$x\wedge y$ for $x,y\in D$,
\item has a \emph{least element} $\bot$ and a \emph{greatest element} $\top$,
\item is \emph{distributive} in the sense that 
$
x\wedge (y\vee z)\cong (x\wedge y)\vee(x\wedge z)
$
{for all} $x,y,z\in B$, and 
\item is \emph{complemented}, i.e.\ for every $x\in D$ there exists
a $\neg x$ with $x\wedge \neg x\cong\bot$ and $x\vee\neg x\cong\top$.
\end{enumerate}
\end{definition}
\begin{lemma}\label{lem:boolean}
Writing $\varphi\leq\psi$ for $\varphi\enti\psi$,
$(P(\kkpi)^I,\leq)$ is a Boolean prealgebra.
\end{lemma}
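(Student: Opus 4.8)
The plan is to verify the four conditions of Definition~\ref{def:boolean-algebra} directly, using the encodings in~\eqref{eq:encodings} together with the admissibility of classical natural deduction established in Lemma~\ref{lem:classical-nd}. Since $(\ent_I)$ is reflexive by the (Ax) rule and transitive by the (\imp E)/(W) rules (or simply by cut, which is derivable), the binary part of $(\ent_I)$ makes $P(\kkpi)^I$ a preorder, so we only need to check that the order-theoretic operations exist up to the induced equivalence $\cong$.

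First I would identify the candidate structure: $\bot$ is the constant predicate $i\mapsto\kkpi$; $\top$, $\varphi\wedge\psi$, $\varphi\vee\psi$, $\neg\varphi$ are given pointwise by the encodings~\eqref{eq:encodings}. Then I would argue that $\bot\ent_I\psi$ for all $\psi$ by the ($\bot$E) rule, and $\psi\ent_I\top$ since $\top=\bot\imp\bot$ is realized by $\lambda x\qdot x$ regardless of the context; this gives conditions~(2). For condition~(1), I would show that $\varphi\wedge\psi$ is a meet by checking $\varphi\wedge\psi\ent_I\varphi$, $\varphi\wedge\psi\ent_I\psi$, and that $\theta\ent_I\varphi$, $\theta\ent_I\psi$ imply $\theta\ent_I\varphi\wedge\psi$ — all of which are instances of the corresponding rules of system Nc transported through Lemma~\ref{lem:classical-nd}; dually for the join $\varphi\vee\psi$ using the classical disjunction rules. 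For conditions~(3) and~(4), distributivity and complementation are again just the statements that the relevant Nc-derivable sequents hold in both directions, so they follow from Lemma~\ref{lem:classical-nd} by the same token; in particular the law of excluded middle $\top\ent_I\varphi\vee\neg\varphi$ is where the classical (PeL) rule is essential.

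The only genuine subtlety — and the step I expect to be the main obstacle — is the bookkeeping between the multi-premise entailment relation of Table~\ref{tab:erelrules} and the single-premise preorder $\varphi\leq\psi :\Leftrightarrow \varphi\ent_I\psi$ used in Definition~\ref{def:boolean-algebra}. One must make sure that the binary order is genuinely a preorder (transitivity needs cut, which is derivable from (\imp I), (\imp E) and the structural rules) and that every defining (in)equality of a Boolean prealgebra corresponds to a \emph{two-premise or fewer} sequent that Nc proves; this is routine once one notes that meets/joins are characterised by pairs of one-premise entailments. Given that Lemma~\ref{lem:classical-nd} already packages all of classical propositional logic, the remainder is a mechanical translation, so I would state it as such and not belabour the individual derivations.
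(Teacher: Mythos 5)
Your proposal is correct and follows essentially the same route as the paper: establish the preorder from (Ax) plus a derivable cut (via ($\imp$I)/($\imp$E)), take the lattice operations from \eqref{eq:connectives} and the encodings \eqref{eq:encodings}, and reduce all Boolean prealgebra conditions to derivability of the corresponding sequents in classical natural deduction via Lemma~\ref{lem:classical-nd}. Nothing essential differs, so no further comparison is needed.
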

\begin{proof}
The (Ax) rule implies that $\leq$ is reflexive, and transitivity follows
from the derivation
\vspace{-1mm}
\[
\AXC{$\varphi\enti\psi$}
\AXC{$\psi\enti\theta$}
\UIC{$\enti\psi\imp\theta$}
\BIC{$\varphi\enti\theta$}
\DP.\quad\text{}
\]
\vspace{-1mm}
Thus, $\leq$ is a preorder on $P(\kkpi)^I$.

The joins, meets, complements, and least and greatest element are given by
the corresponding logical operations as defined in~\eqref{eq:connectives} 
and~\eqref{eq:encodings}.

The required properties all follow from derivability of corresponding entailments
and rules in classical natural deduction -- for example, $\varphi\wedge\psi$ is
a binary meet of $\varphi$ and $\psi$ since 
\begin{equation}
(\ast)\text{ the entailments }\varphi\wedge\psi\enti\varphi\text{ and }
\varphi\wedge\psi\enti\psi\text{ and the rule }\,
\AXC{$\theta\enti\varphi$}
\AXC{$\theta\enti\psi$}
\BIC{$\theta\enti\varphi\wedge\psi$}
\DP
\end{equation}\vspace{-1mm}
are derivable.

Distributivity follows from derivability of the entailments
$
\varphi\wedge(\psi\vee\theta)\enti(\varphi\wedge\psi)\vee(\varphi\wedge\theta)
$ and $
(\varphi\wedge\psi)\vee(\varphi\wedge\theta)\enti\varphi\wedge(\psi\vee\theta).
$
\end{proof}
We now come to \emph{triposes}, which are a kind categorical model
for higher order logic. We use a `strictified' version of the original
definition~\cite[Def.~1.2]{hjp80} since this bypasses some subtleties and is
sufficient for our purposes. Furthermore, we are only interested modeling
classical logic here, and thus can restrict attention to triposes whose fibers
are Boolean (instead of Heyting) prealgebras.
\begin{definition}
A \emph{strict\footnote{`Strict' refers to the facts that (i) 
$\trip$ is a \emph{functor}, not merely a pseudofunctor (ii) the Boolean prealgebra 
structure is preserved `on the nose' by the monotone maps
$\trip(f)$ (iii) the Beck-Chevalley condition is required up to equality, not
merely isomorphism, (iv) we require equality and uniqueness in the last 
condition. Every strict tripos is a tripos in the usual sense, and conversely
it can be shown that any tripos is equivalent to a strict one.} 
Boolean
tripos} is a contravariant functor
$
\trip:\setord
$
from the category of sets to the category of preorders such that
\begin{itemize}
\item for every set $I$, the preorder $\trip(I)$ is a Boolean prealgebra, and
for any function $f:J\to I$, the induced monotone map 
$\trip(f):\trip(I)\to\trip(J)$ preserves all Boolean prealgebra structure.
\item for any $f:J\to I$, $\trip(f)$ has left and right 
adjoints\footnote{`Adjoint' in the sense of `adjoint functor', where monotone
maps are viewed as functors between degenerate
categories.}
$\exists_f\adj\trip(f)\adj\forall_f$ such that 
\begin{equation}\label{eq:bc-cond}
\text{for any pullback square\footnotemark}\quad
\vcenter{\xymatrix@R-3mm{
L\ar[r]_q\ar[d]_p & K\ar[d]^g \\
J\ar[r]^f & I
}}
\end{equation}
\footnotetext{%
The square being a pullback means that $f\circ p=g\circ q$ and 
$\forall jk\qdot f(j)=g(k)\imp\exists! l\qdot p(l)=j\wedge q(l)=k$.
}
we have $\trip(g)\circ\forall_f=\forall_q\circ\trip(p)$ (this is the
\emph{Beck-Chevalley condition}), and
\item there exists a \emph{generic predicate}, i.e.\ a set $\prop$ and
an element $\triptr\in\trip(\prop)$ such that for every set $I$ and 
$\varphi\in\trip(I)$ there exists a unique function $f:I\to\prop$ with 
$\trip(f)(\triptr)=\varphi$.
\end{itemize}
\end{definition}
\medskip
The assignment $I\mapsto(P(\kkpi)^I,\leq)$ extends to a functor
$\trip_\kkpole:\setord$ by letting $\trip_\kkpole(f)=f^*$, i.e.\ mapping every 
function $f:J\to I$ to the reindexing function along $f$, which is monotone 
since every realizer of $\varphi\enti\psi$ is also a
realizer of $\varphi\circ f\entj\psi\circ f$. 
\begin{theorem}\label{theo:tripos}
$\trip_\kkpole$ is a strict Boolean tripos.
\end{theorem}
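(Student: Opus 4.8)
The plan is to verify the three bullet points in the definition of a strict Boolean tripos for $\trip_\kkpole$, drawing on the lemmas already established.

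First I would address the fibrewise structure. Lemma~\ref{lem:boolean} already shows that each $\trip_\kkpole(I) = (P(\kkpi)^I,\leq)$ is a Boolean prealgebra. For functoriality, one checks directly from~\eqref{eq:connectives} that $f^*$ commutes strictly with $\bot$, $\imp$, and hence with all the encoded connectives of~\eqref{eq:encodings} — this is immediate since $f^*\varphi = \varphi\circ f$ and all connectives are defined pointwise in the second argument. Preservation of the ordering is exactly the remark made just before the theorem statement (a realizer of $\varphi\enti\psi$ realizes $f^*\varphi\entj f^*\psi$). That $\trip_\kkpole$ is a functor ($\id^* = \id$, $(g\circ f)^* = f^*\circ g^*$) is trivial by associativity of composition.

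Next I would construct the adjoints. The right adjoint $\forall_f$ is the operation defined in~\eqref{eq:connectives}, and the adjunction $\trip_\kkpole(f)\adj\forall_f$ — i.e.\ $f^*\Gamma\entj\xi$ iff $\Gamma\enti\forall_f\xi$ — is precisely the content of the ($\forall$I)/($\forall$E) rules in Lemma~\ref{lem:rules-valid}, which were shown to have the same realizers. For the left adjoint I would set $\exists_f\xi \defcon \forall_g(\xi\imp\bot)\imp\bot$ where... actually, the cleanest route is to define $\exists_f$ via the de Morgan dual $\exists_f\xi = \neg\forall_f\neg\xi$ and derive the adjunction $\exists_f\adj f^*$ from the one for $\forall_f$ together with the classical equivalences of Lemma~\ref{lem:classical-nd}; alternatively one defines $\exists_f\theta(i) = \bigcup_{f(j)=i}\theta(j)\porth{}^\Bot$-style union and checks the triangle laws directly using the rules. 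For the Beck-Chevalley condition I would take a pullback square as in~\eqref{eq:bc-cond}; unwinding the pullback property (the footnote description) shows that for $i = f(j)$ the fibres $\{k\mid g(k)=i\}$ and $\{l\mid p(l)=j\}$ are in bijection via $q$, so $\bigcup_{g(k)=i}$ and $\bigcup_{p(l)=j}$ range over corresponding index sets; hence $\trip_\kkpole(g)(\forall_f\xi)$ and $\forall_q(\trip_\kkpole(p)\xi)$ are literally the same function, giving equality on the nose as required by strictness.

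Finally, for the generic predicate I would take $\prop = P(\kkpi)$ and $\triptr = \id_{P(\kkpi)} \in P(\kkpi)^{P(\kkpi)} = \trip_\kkpole(\prop)$. Then for any set $I$ and $\varphi\in P(\kkpi)^I$, the function $\varphi$ itself, viewed as a map $I\to P(\kkpi)$, is the unique $f$ with $\trip_\kkpole(f)(\triptr) = \triptr\circ f = f = \varphi$; uniqueness is immediate since $\trip_\kkpole(f)(\triptr) = f$ on the nose. I expect the main obstacle to be the left adjoint $\exists_f$: unlike $\forall_f$ it is not given by a direct union formula that makes the triangle identities transparent, so one must either push through the classical-logic encoding carefully or verify the adjunction inequalities $\xi\entj f^*\exists_f\xi$ and $f^*\forall\!\!$-style counit by exhibiting explicit proof-like realizers, which is the one place real $\lambda$-term bookkeeping is needed. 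Everything else is bookkeeping on top of Lemmas~\ref{lem:rules-valid}, \ref{lem:classical-nd}, and~\ref{lem:boolean}.
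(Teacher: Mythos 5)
Your proposal is correct and matches the paper's proof in all essentials: the fibrewise Boolean prealgebra structure and its strict preservation come from Lemma~\ref{lem:boolean} and \eqref{eq:connectives}, the adjunction $f^*\adj\forall_f$ is exactly the content of the ($\forall$I)/($\forall$E) rules, the left adjoint is taken to be $\exists_f=\neg\circ\forall_f\circ\neg$ with the adjunction obtained by a short chain of entailment equivalences using contraposition (so your anticipated need for explicit $\lambda$-term bookkeeping does not arise), Beck--Chevalley is checked by unwinding the union formula for $\forall$ over the pullback, and the generic predicate is the identity on $P(\kkpi)$. The only nitpick is that your Beck--Chevalley bijection is stated in the transposed direction: for fixed $k\in K$ one needs $p$ to restrict to a bijection $\{l\mid q(l)=k\}\to\{j\mid f(j)=g(k)\}$, which is equally immediate from the pullback property.
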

\begin{proof}
We have shown in Lemma~\ref{lem:boolean} that the preorders $(P(\kkpi)^I,\leq)$
are Boolean prealgebras. It is immediate from~\eqref{eq:connectives} that 
the reindexing functions $f^*$ preserve $\bot$ and $\imp$, and the other Boolean 
operations are preserved since they are given by encodings.

The identity function $\id:P(\kkpi)\toppi$ is a generic predicate for
$\trip_\kkpole$.

The ($\forall$I) and ($\forall$E) rules together imply
that the operation $\forall_f:P(\kkpi)^I\toppi^J$ is right adjoint
to $f^*$ for any $f:J\to I$. Existential quantification along $f:J\to I$
is given by $\exists_f =\neg\circ\forall_f\circ\neg$, which is left adjoint to
$f^*$ since
\[
\neg\forall_f\neg\varphi\enti\psi\qtext{iff}
\neg\psi\enti\forall_f\neg\varphi\qtext{iff}
f^*\neg\psi\entj\neg\varphi \qtext{iff}
\neg f^*\psi\entj\neg\varphi \qtext{iff}
\varphi\entj f^*\psi
\]
for all $\varphi:J\toppi$ and $\psi:I\toppi$.

It remains to verify the Beck-Chevalley condition. Given a square as 
in~\eqref{eq:bc-cond} we have
\[
\textstyle g^*\forall_f(\varphi(k))=\bigcup\setof{\varphi(j)}{f(j)=g(k)}
\qtext{and}
\forall_q(p^*(k))=\bigcup\setof{\varphi(j)}{\exists l\qdot pl=j\wedge ql= k},
\]
and the two terms are equal since the square is a pullback.
\end{proof}
Thus we obtain a tripos $\trip_\kkpole$ \emph{for each pole} $\kkpole$.
As Hyland, Johnstone, and Pitts showed in~\cite{hjp80}, every tripos $\trip$
gives rise to a topos $\catset[\trip]$ via the \emph{tripos-to-topos
construction}. Since the fibers of the triposes $\trip_\kkpole$ are Boolean 
prealgebras, the toposes $\catset[\trip_\kkpole]$ are Boolean as well, which means
that their {internal logic} is classical.

\subsubsection{Consistency}

Triposes of the form $\trip_\kkpole$ can be degenerate in two ways: if 
$\kkpole$ is empty then $\trip_\kkpole(I)\simeq(P(I),\subs)$ for every set $I$, 
and the topos $\catset[\trip_\kkpole]$ is equivalent to the category $\catset$.

If, in the other extreme, the pole is so big that there exists a proof-like 
$t$ realizing $\bot$, i.e.\ falsity is valid in the model, then we have 
$\trip_\kkpole(I)\simeq 1$ for all $I$ (since $t$ realizes every entailment
$\varphi\enti\psi$), and the topos $\catset[\trip_\kkpole]$ 
is equivalent to the terminal category.

By \emph{consistency} we mean that falsity is \emph{not} valid, or equivalently 
that
\begin{equation}\label{eq:consistency}
\forall t\in\kkpl\ex\pi\in\kkpi\qdot t\star\pi\not\in\kkpole.
\end{equation}
The `canonical' (according to Krivine~\cite{krivine2011ralgsii}) non-trivial
consistent pole is the \emph{thread model}, which is given by postulating a 
stack constant $\pi_t$ for each proof-like term $t$, and defining
$
\kkpole=\setof{p\in\kkproc}{\neg\exists t\in\kkpl\qdot t\star\pi_t\wiggles p}.
$
Then the processes $t\star\pi_t$ are not in $\kkpole$ for any proof-like $t$, which
ensures the validity of condition~\eqref{eq:consistency}.

In the next section we show how the presence of side effects allows to define
a variety of new, `meaningful' consistent poles.

\subsection{Krivine realizability with I/O}\label{sec:extended-real}

The developments of the previous section generalize pretty much directly to the
syntax with I/O. Concretely, we carry over the definitions of \emph{pole},
\emph{truth value}, \emph{realizer}, \emph{predicate}, and of the basic
logical operations $\bot,\imp,\forall$, by replacing $\kklam$ with $\kklame$,
$\kkpi$ with $\kkpie$, and $\kkproc$ with $\kkproce$.

We point out that in presence of effects, Definition~\ref{def:pole}
only means that $\kkpole$ is saturated w.r.t.\ \emph{effect-free} evaluation,
in contrast to Miquel's approach~\cite{miquel2009modaleffects}
where a pole is a set of (what we call) execution contexts, closed under
the entire execution relation.

The concept of proof-like term deserves some reexamination. It turns out that
the appropriate concept of \emph{proof-like term} is `term not containing
any side effects'. This is consistent with Definition~\ref{def:prooflike} if we 
read `free of side effects' as `free of non-logical constructs', which are the 
stack constants in Krivine's case. Continuation terms, on the other hand, can be 
considered proof-like. We redefine therefore:
\begin{definition}
The set $\kkpl\subs\kklame$ of \emph{proof-like terms} is the set of terms not 
containing any of the constants $\readb,\writez,\writeo,\tend$.
\end{definition}
With this rephrased definition of proof-like term, we can define the entailment
relation on the extended predicates in the same way:
\begin{definition}$ $
For any set $I$ and integer $n$, the $(n+1)$-ary entailment
relation $(\ent_I)$ on the set $P(\kkpie)^I$ of extended predicates on $I$ is 
defined by 
\vspace{-1mm}
\[\varphi_1\dots\varphi_n\ent_I\psi\qtext{if and only if} \exists t\in\kkpl\qdot 
t\real\forall_I(\varphi_1\imp\dots\imp\varphi_n\imp \psi).\]
\vspace{-1mm}
As a special case, the ordering on extended predicates is defined by
\[
\varphi\leq\psi\qtext{if and only if} \exists t\in\kkpl\qdot 
t\real\forall_I(\varphi\imp \psi).
\]
\end{definition}
\vspace{-1mm}
With these definitions, we can state analogues of Lemma~\ref{lem:boolean}
and Theorem~\ref{theo:tripos}:
\begin{theorem}$ $
\begin{itemize}
\item For each set $I$, the order $(P(\kkpie)^I,\leq)$ of extended predicates
is a Boolean prealgebra.
\item 
The assignment $I\mapsto(P(\kkpie)^I,\leq)$ gives rise to a strict Boolean 
tripos $\trip_\kkpole:\setord$.
\end{itemize}
\end{theorem}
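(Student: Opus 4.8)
The plan is to observe that the entire development of Subsection~\ref{sec:krivinereal} was carried out using only structural properties of the syntax and machine that are shared by the I/O syntax~\eqref{eq:grammar-extended} and its effect-free evaluation relation~\eqref{eq:red-rel} (embedded as the $(\tau)$ clauses of~\eqref{eq:ex-rel}). Concretely, I would go through Lemmas~\ref{lem:kpi-real}, \ref{lem:rules-valid}, \ref{lem:classical-nd}, and~\ref{lem:boolean}, and Theorem~\ref{theo:tripos}, and check that each proof transports verbatim once we replace $\kklam$ by $\kklame$, $\kkpi$ by $\kkpie$, $\kkproc$ by $\kkproce$, and reinterpret $\kkpl$ according to the new Definition of proof-like term. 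The only facts about the machine those proofs use are the reduction rules (push), (pop), (save), (restore), which are precisely the effect-free rules still present in~\eqref{eq:ex-rel}, together with saturation of the pole with respect to $\kred$, which is exactly what Definition~\ref{def:pole} continues to guarantee in the I/O setting.

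First I would re-examine Lemma~\ref{lem:kpi-real}: its proof uses only $\comk_\pi\star t\ap\rho\kred t\star\pi$ and saturation, both unchanged, so it carries over. Next, Lemma~\ref{lem:rules-valid}: the realizers exhibited for (Ax), ($\bot$E), ($\imp$I), ($\imp$E), (W), (C), (S), ($\forall$I), ($\forall$E) are pure $\lambda$-terms, hence proof-like in the new sense, and the (PeL) case uses $\comccc$, which is also proof-like under the revised Definition (continuations and call/cc are allowed; only $\readb,\writez,\writeo,\tend$ are forbidden), together with the reduction $\comccc\star t\ap\pi\kred t\star\comk_\pi\ap\pi$ and saturation. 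The key point to flag explicitly is that $\comccc$ and all $\comk_\pi$ remain proof-like, so classical logic is still validated; this is the one spot where the change of definition of $\kkpl$ matters and must be checked. Lemma~\ref{lem:classical-nd} is then purely formal (it manipulates the encodings~\eqref{eq:encodings} using the rules of Table~\ref{tab:erelrules}), and Lemma~\ref{lem:boolean} follows from it exactly as before, giving the first bullet. For the tripos structure, the functor $\trip_\kkpole$ with $\trip_\kkpole(f)=f^*$ is defined identically, monotonicity of $f^*$ is again immediate, the generic predicate is again the identity $\id:P(\kkpie)\to P(\kkpie)$, the adjoints $\forall_f$ and $\exists_f=\neg\circ\forall_f\circ\neg$ are defined and verified by the same chain of equivalences, and Beck-Chevalley is the same set-theoretic computation about unions indexed by pullbacks, which does not involve the syntax at all.

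I would therefore present the proof as: ``All constructions and proofs of Subsection~\ref{sec:krivinereal} go through unchanged after the substitutions $\kklam\rightsquigarrow\kklame$, $\kkpi\rightsquigarrow\kkpie$, $\kkproc\rightsquigarrow\kkproce$, and the revised reading of $\kkpl$; the only machine transitions invoked are the effect-free rules (push), (pop), (save), (restore), with respect to which $\kkpole$ is saturated by Definition~\ref{def:pole}, and the realizers used in Lemma~\ref{lem:rules-valid} --- including $\comccc$ for Peirce's law --- contain no I/O constants and hence remain proof-like.'' The main (and really only) obstacle is conceptual rather than technical: one must be confident that enlarging the term language with $\readb,\writez,\writeo,\tend$ does not break any of the earlier arguments --- e.g.\ that truth values are still just subsets of the (now larger) stack set, that $S^\Bot$ and $S\imp T$ are defined the same way, and that no proof secretly relied on an induction over the old grammar. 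Since all the earlier proofs are ``pointwise'' in the pole and only ever exhibit explicit realizers or invoke saturation, there is no such hidden dependence, and the verification is routine.
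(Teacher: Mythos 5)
Your proposal is correct and takes essentially the same route as the paper: the paper's own proof simply observes that the arguments for Lemmas~\ref{lem:kpi-real}, \ref{lem:rules-valid}, \ref{lem:classical-nd}, \ref{lem:boolean} and Theorem~\ref{theo:tripos} are unobstructed by the new constants and never relied on proof-like terms excluding continuations. Your explicit check that the realizers (including $\comccc$ and the $\comk_\pi$) remain proof-like under the revised definition is exactly the point the paper flags, just spelled out in more detail.
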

\begin{proof}
This follows from the arguments in Section~\ref{sec:krivinereal}, since the 
proofs of Lemmas~\ref{lem:kpi-real},\ref{lem:rules-valid},
\ref{lem:classical-nd},\ref{lem:boolean}, and of Theorem~\ref{theo:tripos} are 
not obstructed in any way by the new constants, nor do they rely on stack 
constants. The redefinition of `proof-like term' does not cause any problems 
either, since we never relied on proof-like terms not containing continuation 
terms.
\end{proof}

The above rephrasing of the definition of proof-like term admits an intuitive 
reformulation of the consistency criterion~\eqref{eq:consistency}:
\begin{lemma}\label{lem:consistency}
A pole $\kkpole$ is consistent if and only if every 
$p\in\kkpole\setminus\{\top\}$ contains a non-logical constant, i.e.\ one of 
$\readb,\writez,\writeo,\tend$.
\end{lemma}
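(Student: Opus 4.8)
The statement is an equivalence, so I would prove the two implications separately, and in both directions the natural move is to reason about what a proof-like term can do when it is run against a stack. Recall that consistency~\eqref{eq:consistency} says $\forall t\in\kkpl\,\ex\pi\in\kkpie\qdot t\star\pi\notin\kkpole$, while proof-like now means: free of $\readb,\writez,\writeo,\tend$. The key elementary fact to establish first is a \emph{subject-reduction-style observation}: if $t$ is proof-like and $t\star\pi\kred p'$, then $p'$ is again a process of the form $t'\star\pi'$ with $t'$ proof-like (no non-logical constant is ever created by the rules (push), (pop), (save), (restore), since these only move around and substitute existing subterms — the newly introduced $\comk_\pi$ is itself proof-like as long as $\pi$ came from a proof-like configuration). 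Hence the $\kred$-orbit of a proof-like process stays proof-like, and in particular never reaches $\top$ (which requires a $\tend$ step, impossible under $\kred$ alone) and never gets stuck at a $\readb/\writez/\writeo/\tend$ in head position.

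\emph{($\Rightarrow$, contrapositive.)} Suppose some $p\in\kkpole\setminus\{\top\}$ contains no non-logical constant. Then $p$ is of the form $t\star\pi$ with $t$ proof-like and $\pi$ built from proof-like terms, so $t$ itself is a proof-like term. By the definition of pole (saturation under $\kred$) and the fact that $\kkpole$ contains $p=t\star\pi$, we get $t\star\pi\in\kkpole$; more to the point, to witness failure of~\eqref{eq:consistency} I just need \emph{one} proof-like $s$ with $s\star\rho\in\kkpole$ for all $\rho$ — actually the cleanest route is: take the proof-like term $t$ from $p=t\star\pi$, observe $t\star\pi\in\kkpole$, and build from $t$ a closed proof-like term realizing $\bot$. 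Concretely, the term $\lambda x\qdot t$ (or, if $\pi=u_1\ap\dots\ap u_k\ap\ve$, the term $\lambda z\qdot t\,u_1\dots u_k$ suitably closed) runs on an arbitrary stack to $t\star\pi$ after finitely many $\kred$-steps, hence lands in $\kkpole$ by saturation; this term is proof-like, so $\bot=\kkpie$ is valid and $\kkpole$ is inconsistent. (The one subtlety is turning a \emph{single} process $t\star\pi\in\kkpole$ into a term that reaches it from \emph{every} stack; the $\lambda$-abstraction trick above does exactly this because (push)/(pop) let us discard the incoming stack and reconstruct $\pi$.)

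\emph{($\Leftarrow$.)} Suppose every $p\in\kkpole\setminus\{\top\}$ contains a non-logical constant; I must show~\eqref{eq:consistency}, i.e.\ every proof-like $t$ is rejected on some stack. Fix a proof-like $t$ and pick, say, the empty stack $\ve$ (or any stack of proof-like terms); then $t\star\ve$ is a proof-like process. If $t\star\ve\notin\kkpole$ we are done. Otherwise, by the orbit observation above, every $\kred$-reduct of $t\star\ve$ is again a proof-like process distinct from $\top$, hence by hypothesis \emph{not} in $\kkpole\setminus\{\top\}$ and not equal to $\top$, i.e.\ not in $\kkpole$ — but saturation of $\kkpole$ would force all of them into $\kkpole$, contradiction. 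So $t\star\ve\notin\kkpole$ after all, giving the required $\pi=\ve$.

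\emph{Main obstacle.} The real content — and the step I would be most careful about — is the orbit lemma: verifying by a short case analysis on the four rules of~\eqref{eq:red-rel} (as lifted to the $(\tau)$ clause of~\eqref{eq:ex-rel}) that $\kred$ preserves proof-likeness of the whole process and never produces $\top$. Everything else is packaging: the ($\Rightarrow$) direction additionally needs the little abstraction trick to pass from "some process in $\kkpole$ is harmless" to "some proof-like \emph{term} realizes $\bot$", and one should double-check that this manufactured term is genuinely closed and proof-like, which it is since $t$ and the $u_i$ were.
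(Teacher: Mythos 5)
Your right-to-left direction is essentially fine but overbuilt, while your left-to-right direction has a genuine gap. For ($\Rightarrow$, contrapositive) you take $p=t\star\pi\in\kkpole\setminus\{\top\}$ with no non-logical constant, where $\pi=u_1\ap\dots\ap u_k\ap\ve$, and claim that a pure abstraction such as $\lambda z\qdot t\,u_1\dots u_k$ reaches $t\star\pi$ from \emph{every} stack. It does not: on the empty stack $\ve$ the process $\lambda z\qdot t\,u_1\dots u_k\star\ve$ is stuck (the (pop) rule needs a non-empty stack), and on a stack $u\ap\rho'$ it evaluates to $t\star u_1\ap\dots\ap u_k\ap\rho'$ --- the tail $\rho'$ of the incoming stack is never discarded, so you land on $t\star\pi$ only when $\rho'=\ve$, and there is no reason for the other processes to lie in $\kkpole$. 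Pure $\lambda$-terms can only pop finitely many entries off the top of a stack and can never erase the remainder; this is exactly what continuation constants are for. The paper's proof uses them: $\comk_\pi t$ is proof-like under the extended definition (continuations are permitted, and $\pi$ contains no non-logical constant by assumption), and for every $\rho\in\kkpie$ one has $\comk_\pi t\star\rho\kred\comk_\pi\star t\ap\rho\kred t\star\pi\in\kkpole$, so by saturation $\comk_\pi t\real\bot$ and the pole is inconsistent. Your parenthetical claim that (push)/(pop) ``let us discard the incoming stack and reconstruct $\pi$'' is precisely the false step.

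For ($\Leftarrow$) your conclusion is right, but the detour through the orbit lemma is unnecessary and its justification is shaky: the hypothesis applies \emph{directly} to $t\star\ve$, which contains no non-logical constant (as $t$ is proof-like and $\ve$ is empty) and is not $\top$, hence is not in $\kkpole$; that already yields \eqref{eq:consistency} with $\pi=\ve$, which is all the paper does. Moreover, in your ``otherwise'' branch you invoke saturation to push the $\kred$-\emph{reducts} of $t\star\ve$ into $\kkpole$, but saturation is closure under anti-evaluation ($p'\kred p$ and $p\in\kkpole$ imply $p'\in\kkpole$) and says nothing about successors; the branch happens to be vacuous, but as written the reasoning is wrong-way-round. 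The subject-reduction-style ``orbit'' observation you flag as the main obstacle is true but is not needed in either direction.
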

\begin{proof}
If every element of $p\in\kkpole\setminus\{\top\}$ contains a non-logical 
constant, then $t\star\ve$ is not in $\kkpole$ for any proof-like $t$, which 
implies~\eqref{eq:consistency}.

On the other hand, if $t\ast\pi\in\kkpole$ does not contain any non-logical 
constant then $\comk_\pi t$ is a proof-like term which realizes $\bot$, since
for any $\rho\in\kkpie$ we have 
$\comk_\pi t\star\rho\kred\comk_\pi\star t\ap\rho\kred t\ast\pi\in\kkpole$.
\end{proof}

\subsubsection{Poles from specifications}\label{se:polespec}

The connection between poles and specifications is established by the following
lemma.
\begin{lemma}\label{lem:spec-pole}\label{lem:bisim-pole}
Every set $\kkpole$ of processes that is closed under weak bisimilarity is a 
pole.
\end{lemma}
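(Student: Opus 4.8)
The plan is to unwind the definition of \emph{pole} (Definition~\ref{def:pole}) and show that closure under weak bisimilarity $\approx$ implies closure under backward $\kred$-steps. So suppose $\kkpole$ is closed under $\approx$, let $p\in\kkpole$, and let $p'$ be a process with $p'\kred p$. I need to conclude $p'\in\kkpole$, and it suffices to show $p'\approx p$, because then closure under bisimilarity gives $p'\in\kkpole$ directly.

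To see $p'\approx p$, I would invoke the correspondence noted just after~\eqref{eq:transsys}: the evaluation relation $\kred$ on processes coincides with the $\tau$-transitions of the LTS. Hence $p'\kred p$ means $p'\ttrans p$, so in particular $p'\gequiv p$ ($\gamma$-equivalence is generated by $\beta$-reduction and $\tau$-transitions). By Lemma~\ref{lem:gamma-bisim}, $\gamma$-equivalence is a weak bisimulation, and by definition of weak bisimilarity this yields $p'\approx p$. Alternatively, one can give the bisimulation witnessing $p'\approx p$ directly: a single $\tau$-step $p'\ttrans p$ means $p'\ttranss p$, so any action of $p$ is matched by $p'$ (prefix the $\tau$-step) and any action of $p'$ factors through $p$ by determinism of $\kred$ — but routing through Lemma~\ref{lem:gamma-bisim} is cleaner and avoids re-proving the bisimulation conditions.

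I do not expect a genuine obstacle here; the statement is essentially a bookkeeping consequence of the machinery already set up. The one point that deserves a sentence of care is the edge case where $p = \top$: the process constant $\top$ has no outgoing $\kred$-transition (inspect~\eqref{eq:red-rel} and~\eqref{eq:ex-rel}), so there is no $p'$ with $p'\kred\top$ and the saturation condition is vacuous at $\top$. For every other $p$, the argument above applies uniformly. So the proof reduces to: $p'\kred p$ gives $p'\ttrans p$, hence $p'\gequiv p$, hence $p'\approx p$ by Lemma~\ref{lem:gamma-bisim}, hence $p'\in\kkpole$ by the assumed closure under $\approx$.
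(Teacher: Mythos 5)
Your proof is correct and follows essentially the same route as the paper: the paper's argument is precisely that $p'\kred p$ implies $p'\approx p$ via Lemma~\ref{lem:gamma-bisim}, so closure under $\approx$ gives saturation. Your extra remarks (the direct bisimulation and the vacuous case $p=\top$) are harmless but not needed.
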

\begin{proof}
This is because $p\approx q$ whenever $p\kred q$, which follows 
from Lemma~\ref{lem:gamma-bisim}.
\end{proof}
Since we can assume that for any reasonable specification the processes 
implementing it are closed under weak bisimilarity, we can thus conclude that
for any specification, the set of processes implementing it is a pole. For 
example:
\begin{itemize}
\item $\kkpole_{\mathrm{cp}}$ is the set of processes that read the input, copy 
every bit immediately to the output, and terminate when the input is empty. We 
have 
$Y\star(\lambda x\qdot \readb(\writez\,x)(\writeo\,x)\tend)\in\kkpole_{\mathrm{cp}}$.
\item $\kkpole_{\mathrm{cp'}}$ contains the processes that first read the
entire input, and {then} write out the same string and terminate. 
We have $R\star F\ap W\ap\cn{0}\in\kkpole_{\mathrm{cp'}}$
with the notations of Section~\ref{sec:exp}.
\item For any partial function $f:\N\pto\N$, the pole $\kkpole_f$ consists of 
those processes that implement $f$ in the sense of Definition~\ref{def:compute}.
\item Since poles are closed under unions, we can define the pole
$\kkpole_F=\bigcup_{f\in F}\kkpole_f$ for any set $F\subs(\N\pto\N)$ of partial 
functions. 
\end{itemize}

\subsubsection{Toposes from computable functions}

We are particularly interested in the poles $\kkpole_f$ associated to 
computable functions $f$, and we want to use the associated triposes 
$\trip_f=\trip_{\kkpole_f}$ and toposes $\catset[\trip_f]$ to study these 
functions.

The following theorem provides a first `sanity check', in showing that the
associated models are non-degenerate.
\begin{theorem}\label{theo:f-com-con} 
Let $f:\N\pto\N$.
\begin{itemize}
\item $\kkpole_f$ is consistent if and only if $f$ is not totally undefined.
\item $\kkpole_f$ is non-empty if and only if $f$ is computable.
\end{itemize}
\end{theorem}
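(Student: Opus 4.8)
The plan is to prove the two biconditionals separately, using Lemma~\ref{lem:consistency} for the first and Theorem~\ref{theo:turing} together with Lemma~\ref{lem:spec-pole} for the second.

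For the first biconditional, recall that $\kkpole_f$ consists of all processes implementing $f$, i.e.\ all $p$ with $(p,\bin(n),\ve)\wiggles(\top,\ve,\bin(f(n)))$ for every $n\in\dom(f)$. Suppose first that $f$ is totally undefined, so $\dom(f)=\varnothing$; then the implementation condition is vacuous, so $\kkpole_f=\kkproce$ is everything. In particular $\comk_\ve\ap\ve$ or simply any $t\star\ve$ with $t$ proof-like lies in $\kkpole_f$, so by Lemma~\ref{lem:consistency} (the direction using a process without non-logical constants) $\kkpole_f$ is inconsistent. Conversely, suppose $f$ is not totally undefined, and pick $n_0\in\dom(f)$. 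I claim every $p\in\kkpole_f\setminus\{\top\}$ contains one of $\readb,\writez,\writeo,\tend$; then Lemma~\ref{lem:consistency} gives consistency. Indeed, if $p=t\star\pi$ is a proof-like process (no non-logical constants), then the execution of $(p,\bin(n_0),\ve)$ can only use $(\tau)$-transitions — the other clauses of~\eqref{eq:ex-rel} require a non-logical constant in head position, and reduction by $\kred$ never introduces one — so it can never reach a process of the form $\top$, contradicting $p\in\kkpole_f$. Hence $\kkpole_f$ contains no proof-like process other than possibly $\top$, as claimed. (One should note $\top\in\kkpole_f$ is impossible once $\dom(f)\neq\varnothing$, or simply observe that $\top$ does not contain a non-logical constant and handle it as the genuinely problematic case: here the point is that $\top$ already fails to implement $f$ on $n_0$ since $(\top,\bin(n_0),\ve)$ is stuck, so $\top\notin\kkpole_f$ and the argument goes through.)

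For the second biconditional, if $f$ is computable then Theorem~\ref{theo:turing} produces a process $p$ implementing $f$, so $\kkpole_f\neq\varnothing$. Conversely, suppose $\kkpole_f\neq\varnothing$, say $p\in\kkpole_f$. Then for each $n\in\dom(f)$ we have $(p,\bin(n),\ve)\wiggles(\top,\ve,\bin(f(n)))$, and since $\wiggle$ is a deterministic, computable transition relation on execution contexts, the following procedure computes $f$: on input $n$, form $(p,\bin(n),\ve)$ and run $\wiggle$; if and when it halts in a state $(\top,\ve,w)$, output the number with $\bin(\cdot)=w$. This terminates with the correct value on every $n\in\dom(f)$ by the implementation property, so $f$ is computable (as a partial function; the procedure may diverge or get stuck off $\dom(f)$, which is exactly what is allowed).

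**The main obstacle** is the subtle bookkeeping around the process constant $\top$ and the weaker ``implements'' notion of Definition~\ref{def:compute}: since that definition imposes no constraint on inputs outside $\dom(f)$, one must be careful that $\kkpole_f$ is not accidentally trivial for non-trivial $f$, and that $\top$ (which contains no non-logical constant) does not sneak in to break consistency. The resolution, as indicated above, is that as soon as $\dom(f)\neq\varnothing$ the constant $\top$ fails to implement $f$, and more generally any proof-like process fails, because effect-free execution can never produce output or reach $\top$ — this is where one leans on the shape of the execution relation~\eqref{eq:ex-rel}. The remaining steps (closure of $\kkpole_f$ under bisimilarity is not even needed here, and decidability of $\wiggle$) are routine.
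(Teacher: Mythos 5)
Your proof is correct and takes essentially the same route as the paper: the first claim via Lemma~\ref{lem:consistency} together with the observation that an effect-free process only performs $\tau$-steps and hence can never reach $\top$ (the paper phrases this as ``an implementer must contain an $\tend$ instruction''), plus the fact that the totally undefined function is implemented by every process; the second claim via Theorem~\ref{theo:turing} and the effectiveness/determinism of the execution relation. Your parenthetical about excluding $\top$ is unnecessary --- Lemma~\ref{lem:consistency} already exempts $\top$ --- and in the edge case $\dom(f)=\{0\}$, $f(0)=0$ its claim is even false (there $(\top,\bin(0),\ve)=(\top,\ve,\bin(f(0)))$, so $\top\in\kkpole_f$), but this is harmless since your main argument only concerns $\kkpole_f\setminus\{\top\}$.
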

\begin{proof}
The first claim follows from Lemma~\ref{lem:consistency}. If $n\in\dom(f)$ and
$t\star\pi$ implements $f$, then $(t\star\pi,\bin(n),\ve)$ must terminate and 
thus $t\star\pi$ must contain an $\tend$ instruction. The totally undefined 
function, on the other hand, is by definition implemented by every process.

For the second claim, we have shown in Theorem~\ref{theo:turing} that every
computable $f$ is implemented by some process. Conversely, every implementable
function is computable since the Krivine machine with I/O is an effective model
of computation.
\end{proof}

\subsection{Discussion and future work}

The structure and properties of the toposes $\catset[\trip_f]$ remain 
mysterious for the moment, and in future work we want to explore which kind
of properties of $f$ are reflected in $\catset[\trip_f]$. In the spirit of 
Grothendieck~\cite{grothendieck1972theorie} we want to view the 
toposes $\catset[\trip_f]$ as \emph{geometric} rather than logical objects, 
the guiding intuition being that $\catset[\trip_f]$ can be seen as 
representation of 
\emph{`the space of solutions
to the algorithmic problem of computing $f$'},
encoding e.g.\ information on how algorithms computing $f$ can be decomposed
into simpler parts. 

Evident problems to investigate are to understand the lattice of truth values
in $\catset[\trip_f]$, and to determine for which pairs $f,g$ of functions 
the associated toposes are equivalent, and which functions can be separated.

A more audacious goal is to explore whether $\catset[\trip_f]$ can teach us
something about the complexity of a computable function $f$. The Krivine machine 
with I/O seems to be a model of computation that is fine grained enough to 
recognize and differentiate time complexity of different implementations of $f$, 
but it remains to be seen in how far this information is reflected in 
the `geometry' of $\catset[\trip_f]$.

\subsection*{Acknowledgements}

Thanks to Jakob Grue Simonsen and Thomas Streicher for many discussions. 

\bibliographystyle{plain}
\bibliography{tlca-2015}
\end{document}